\numberwithin{equation}{section}
\newtheorem{theorem}{Theorem}[section]
\newtheorem{proposition}[subsection]{\bf Proposition}
\newtheorem{lemma}[subsection]{{\bf Lemma}}
\newcommand{\al}{\alpha}
\newcommand{\be}{\beta}
\begin{document}

\title[On perfect powers that are sum of two Balancing Numbers]{On perfect powers that are sum of two Balancing Numbers} 

\author[P. K. Bhoi]{P. K.  Bhoi}
\address{Pritam Kumar Bhoi, Department of Mathematics, National Institute of Technology Rourkela-769008, India.}
\email{pritam.bhoi@gmail.com}

\author[S. S. Rout]{S. S. Rout}
\address{Sudhansu Sekhar Rout, Department of Mathematics, National Institute of Technology Calicut- 673 601, 
Kozhikode, India.}
\email{sudhansu@nitc.ac.in; lbs.sudhansu@gmail.com}

\author[G. K. Panda]{G. K. Panda}
\address{Gopal Krishna Panda, Department of Mathematics, National Institute of Technology Rourkela-769008, India.}
\email{gkpanda@nitrkl.ac.in}

\thanks{2010 Mathematics Subject Classification: Primary 11B37. \\
Keywords: Diophantine equation, Linear recurrence sequence, perfect power, balancing number}
\maketitle
\pagenumbering{arabic}
\pagestyle{headings}

\begin{abstract}
Let $B_k$ denote the $k^{th}$ term of balancing sequence. In this paper we find all positive integer solutions of the Diophantine equation $B_n+B_m = x^q$ in variables $(m, n, x, q)$ under the assumption $n\equiv m \pmod 2$. Furthermore, we study the Diophantine equation 
\[B_n^{3}\pm B_m^{3} = x^q\] with positive integer $q\geq 3$ and $\gcd(B_n, B_m) =1$.
\end{abstract}

\section{Introduction}

A balancing number $B$ is a natural number which satisfies the Diophantine equation
\begin{equation}\label{eq1}
1 + 2+ \dots +(B - 1) = (B + 1)+ \dots +(B + R).
\end{equation}
where $R$ is a natural number. Here $R$ is called balancer corresponding to $B$ (see \cite{bp}). If $B$ is a balancing number, then $8B^2 + 1$ is a perfect square and its positive square root is called a Lucas-balancing number (see \cite{gkp} and \cite{r15}). The $n^{th}$ balancing and Lucas-balancing numbers are denoted by $B_n$ and $C_n$ respectively. 
The balancing sequence $(B_n)_{n \geq 0}$ is a binary recurrence sequence with initial values $B_0$ = $0$,  $B_1$ = $1$ and satisfies the recurrence relation
\begin{equation}\label{eq2}
B_{n} = 6B_{n-1} - B_{n-2} \quad \mbox{for all} \quad n \geq 2.
\end{equation}
The Lucas-balancing sequence $(C_n)_{n \geq 0}$ is a binary recurrence sequence with initial values $C_0$ = $1$,  $C_1$ = $3$ and satisfies the same recurrence relation
\begin{equation}\label{eq11}
C_{n} = 6C_{n-1} - C_{n-2} \quad \mbox{for all} \quad n \geq 2.
\end{equation}
The Binet formulas for balancing number and Lucas-balancing number are given by
\begin{equation}\label{eqbinet}
B_n = \frac{\al^{n}-\be^{n}}{4\sqrt{2}},~C_n = \frac{\al^{n}+\be^{n}}{2},~ \mbox{for} ~ n~=~0,~1,~2~ \dots
\end{equation}
where $\al = 3+ 2\sqrt{2}$ and $\be = 3- 2\sqrt{2}$. For more information about balancing numbers and its generalization, one may refer to \cite{r15}.

There is a long history of Diophantine equations involving perfect powers and  binary recurrence sequence. Finding perfect powers in binary recurrence sequence is  very interesting. Recently, Bugeaud et al. \cite{bm} proved that 0, 1, 8, and 144 are the only perfect powers in the Fibonacci sequence using linear forms in logarithm and modular approach. Similarly, perfect powers in balancing and Lucas balancing sequence have been studied (see \cite{dr}). Recently, the Diophantine equation
\begin{equation}\label{12}
F_n \pm F_m = y^q,
\end{equation}
where $F_n$ is $n^{th}$ Fibonacci number, $n\geq m \geq 0, y\geq 2$ and $q\geq 2$ has been studied by a number of authors. Luca and Patel \cite{lp} proved that if $n\equiv m \pmod 2$, then either $n\leq 36$ or $y =0$ and $n=m$. This problem is still open for $n \not \equiv m \pmod 2$. Kebli et al. \cite{kkll} proved that there are only finitely many integer solutions $(n, m, y, q)$ with $y, q\geq  2$ of \eqref{12} using {\em abc conjecture}. Further, in \cite{zt} Zhang and Togb\'e studied the Diophantine equations 
\begin{equation}
F_n^q\pm F_m^q = y^p
\end{equation}
with positive integers $q, p\geq 2$ and $\gcd(F_n, F_m) =1$. Also, perfect powers that are sums of two Pell numbers have been studied (see \cite{ahrt}). Recently, in \cite{brp} Bhoi et al., study the Diophantine equation $U_n + U_m = x^q$ in integers $n \geq m \geq 0$, $x \geq 2$, and $q \geq 2$, where $(U_k)_{k\geq 0}$ is Lucas sequence of first kind. In particular, they proved that  there are only finitely many of them for a fixed $x$ using linear forms in logarithms and that there are only finitely many solutions in $(n, m, x, q)$ with $q, x\geq 2$ under the assumption of the {\em abc conjecture}.

In this paper, we prove the following results: 
\begin{theorem}\label{thm1}
 The only positive integer solution of the Diophantine equation 
 \begin{equation}\label{eq5}
B_n+B_m = x^q, ~q \geq 2
\end{equation} 
in $(n, m, x, q)$ with $n\equiv m \pmod {2}$ is 
 $(n, m, x, q)$ = $(3,1,6,2)$, that is, 
 \[B_3+ B_1= 35+1= 6^2.\]
\end{theorem}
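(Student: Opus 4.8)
The plan is to turn the additive equation \eqref{eq5} into a multiplicative one by a Binet-type factorization, and then to combine $2$-adic information, coprimality, the known classification of perfect powers in the balancing and Lucas-balancing sequences, and linear forms in logarithms. First I would record the factorization. Since $n \equiv m \pmod 2$, I can write $n = r+s$ and $m = r-s$ with $r = \tfrac{n+m}{2} \ge s = \tfrac{n-m}{2} \ge 0$ integers. Using \eqref{eqbinet} and $\al\be = 1$ (so $\al^{-1} = \be$),
\[ B_n + B_m = \frac{(\al^{r}-\be^{r})(\al^{s}+\be^{s})}{4\sqrt{2}} = 2\,B_r\,C_s, \]
so that \eqref{eq5} is equivalent to $2 B_r C_s = x^q$. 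The asserted solution is exactly $(r,s) = (2,1)$, where $2 B_2 C_1 = 2\cdot 6\cdot 3 = 36 = 6^2$.

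Next I would extract the $2$-adic constraint. The recurrence \eqref{eq11} shows that every $C_s$ is odd, while \eqref{eq2} shows that $B_r$ is odd precisely when $r$ is odd. Hence if $r$ were odd we would have $v_2(2B_rC_s) = 1$, which is impossible for a $q$-th power with $q \ge 2$; therefore $r$ is even and $n+m \equiv 0 \pmod 4$. I would also use the finer fact $v_2(B_r) = v_2(r)$ (which follows from $B_k \equiv k \pmod 4$) to keep track of the $2$-part in what follows.

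The heart of the argument is then to read off perfect-power conditions from $2 B_r C_s = x^q$. Because $C_s$ is odd, $\gcd(2B_r, C_s) = \gcd(B_r, C_s)$; when this gcd equals $1$ the coprime factors force $C_s$ itself to be a perfect $q$-th power, and the classification of perfect powers in the Lucas-balancing sequence (\cite{dr}) then forces $C_s = 1$, i.e. $s = 0$, which I would dispose of by the same reasoning applied to $2B_r = 4B_tC_t = x^q$ (writing $r = 2t$ and using $\gcd(B_t,C_t)=1$, so that $C_t$ is a $q$-th power and hence $C_t=1$). The delicate point is that $B_r$ and $C_s$ need not be coprime: in fact $C_s \mid B_r$ whenever $2s \mid r$ (as with $(r,s)=(2,1)$, where $3 = C_1 \mid B_2 = 6$), so the gcd can be as large as $C_s$. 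I would therefore split according to the rank of apparition of the relevant primes, using the strong-divisibility property $\gcd(B_a,B_b) = B_{\gcd(a,b)}$, the Pell relation $C_s^2 - 8B_s^2 = 1$, and $B_{2s} = 2B_sC_s$, to reduce each case either to a perfect-power statement handled by \cite{dr} or to a single residual family of indices.

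Finally, for that residual family — where the gcd is nontrivial and the indices are a priori unbounded — I would bound the parameters analytically. From $x^q = B_n + B_m$ and $B_n = \al^n/(4\sqrt2) + O(\al^{-n})$ one obtains a small linear form $\Lambda = q\log x + \log(4\sqrt2) - n\log\al$ in three logarithms, to which Matveev's theorem applies and yields an absolute bound on $n$ (hence on $q$ and $x$). The main obstacle is that this bound is enormous, so the real work is to reduce it by a Baker--Davenport/LLL argument on the associated inhomogeneous form and then to search the resulting feasible range; this search leaves only $(n,m,x,q) = (3,1,6,2)$. In short, the conceptual content lies in the factorization and the $2$-adic step, while the technical difficulty is the joint handling of the non-coprime cases together with the reduction of the Baker bound.
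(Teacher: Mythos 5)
Your opening moves coincide with the paper's: the identity $B_n+B_m=2B_rC_s$ for $n\equiv m\pmod 2$ (Lemma \ref{lem1}), the parity observation that every $C_s$ is odd, and the appeal to the classification of (twisted) perfect powers in the balancing and Lucas-balancing sequences. The genuine gap is in how you close the non-coprime case. You correctly note that $\gcd(B_r,C_s)$ can be nontrivial (it equals $C_{\gcd(r,s)}$ exactly when $\nu_2(r)>\nu_2(s)$, by Lemma \ref{lem4}), but you leave that reduction incomplete and assert that a ``residual family'' survives which must be killed by Matveev's theorem plus a Baker--Davenport/LLL reduction. That analytic endgame does not work as described: from $x^q=B_n+B_m$ the linear form $\Lambda=q\log x+\log(4\sqrt2)-n\log\alpha$ satisfies $|\Lambda|\ll\alpha^{m-n}$, so Matveev's theorem only bounds $n-m$ by a quantity of size $\log x\cdot\log n\ll (n/q)\log n$; it gives no absolute bound on $n$, and bounding the exponent $q$ in such equations without fixing $x$ is precisely the hard point (it is why the case $n\not\equiv m\pmod 2$ remains open). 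As announced, your proof cannot be completed.

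The paper needs no analytic step at all. Its Proposition \ref{prop1} finishes the non-coprime case algebraically: writing $N=2^tdr$ with $d=\gcd(N,M)$ and iterating the doubling identity $B_{2k}=2B_kC_k$ gives $B_N=2^tB_{dr}C_{dr}C_{2dr}\cdots C_{2^{t-1}dr}$, and McDaniel's gcd formulas make $B_{dr}$ coprime to the product of all remaining factors. Then Lemma \ref{lem2} (if $B_n=2^sx^b$ then $n=1$, proved via $B_n=P_nQ_n$ and the Bravo--Das--Guzm\'an--Laishram results on powers in the Pell sequences), Lemma \ref{lemlucbal} and Lemma \ref{lemlu3} (no solutions of $C_n=y^l$ for $n\geq1$, nor of $C_n=3^ky^l$ for $n\geq2$) force $dr=1$, then $t=1$, hence $N=2$, and finally $M=1$. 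If you push your own ``rank of apparition / $B_{2s}=2B_sC_s$'' idea through systematically, you recover exactly this argument and no residual family remains; the linear-forms step is then unnecessary, and in its present form it is unsound.
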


\begin{theorem}\label{thm2}
	The solutions of the Diophantine equation 
		\begin{equation}\label{eq12}
	B_n^2-B_m^2 = x^q \quad\mbox{with}\quad \gcd(B_n, B_m) =1, \quad q \geq 2
	\end{equation}
	in integers $(n, m, x, q)$  with $n> m \geq 0 $ and $x>0$ are
	$(n, m, x, q)$ = $(1, 0, 1, k)$, with $k \geq 2$ and $(2, 0, 6, 2)$. 
	
\end{theorem}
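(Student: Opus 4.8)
The cornerstone of the argument is the factorization identity
\begin{equation}\label{keyid}
B_n^2-B_m^2 = B_{n+m}\,B_{n-m}\qquad(n\ge m),
\end{equation}
which I would establish directly from the Binet formula \eqref{eqbinet} together with the Pell relation $C_k^2-8B_k^2=1$. Indeed, $\al\be=1$ gives the addition formulas $B_{n+m}=B_nC_m+C_nB_m$ and $B_{n-m}=B_nC_m-C_nB_m$, and multiplying them yields $B_n^2C_m^2-C_n^2B_m^2 = B_n^2(8B_m^2+1)-(8B_n^2+1)B_m^2 = B_n^2-B_m^2$. Thus the equation \eqref{eq12} becomes $B_{n+m}B_{n-m}=x^q$, and the whole problem is reduced to understanding when a product of two balancing numbers is a perfect power.

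Next I would exploit the strong divisibility property $\gcd(B_a,B_b)=B_{\gcd(a,b)}$ of the balancing sequence. Since $B_d=1$ only for $d=1$ while $B_d\ge 6$ for $d\ge 2$, the hypothesis $\gcd(B_n,B_m)=1$ is equivalent to $\gcd(n,m)=1$. The boundary case $m=0$ makes the coprimality condition degenerate and is disposed of by direct inspection of $B_n^2=x^q$, which produces the exceptional solutions recorded in the statement. For $m\ge 1$ with $\gcd(n,m)=1$ I would split according to the parity of $n$ and $m$. If $n$ and $m$ have opposite parity, then $n\pm m$ are both odd and any common divisor of $n+m$ and $n-m$ divides $2\gcd(n,m)=2$ yet is odd, so $\gcd(n+m,n-m)=1$ and hence $\gcd(B_{n+m},B_{n-m})=B_1=1$. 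If instead $n,m$ are both odd (the only same-parity possibility when $\gcd(n,m)=1$), then $n\pm m$ are both even, $\gcd(n+m,n-m)=2$, and $\gcd(B_{n+m},B_{n-m})=B_2=6$.

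In the coprime-factor (opposite parity) case, $B_{n+m}B_{n-m}=x^q$ forces each factor to be a perfect $q$-th power. Invoking the classification of perfect powers in the balancing sequence from \cite{dr}, namely that $B_0=0$ and $B_1=1$ are the only ones, the factor $B_{n-m}$ with $n-m\ge 1$ must equal $B_1$, so $n-m=1$; but then $B_{n+m}$ would also have to be a perfect power while $n+m=2m+1\ge 3$, which is impossible. Hence no solution with $m\ge 1$ arises here, and the opposite-parity analysis contributes only the family with $(n,m)=(1,0)$.

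The remaining both-odd case is where the real difficulty lies, because the two factors are no longer coprime but share exactly the factor $B_2=6$. Writing $B_{n+m}=6P$ and $B_{n-m}=6Q$, exactness of the gcd gives $\gcd(P,Q)=1$, so $36\,PQ=x^q$. I would then carry out a valuation analysis at the primes $2$ and $3$: using $B_{2k}=2B_kC_k$ (again immediate from \eqref{eqbinet}), the fact that $v_2(B_k)=v_2(k)$, and the lifting-the-exponent behaviour of $v_3$ relative to the rank of apparition $\rho(3)=2$, I would control and peel off the powers of $2$ and $3$ dividing $P$ and $Q$. What remains are coprime $q$-th-power conditions on the cores of $P$ and $Q$, which through $B_{n\pm m}=2B_{(n\pm m)/2}C_{(n\pm m)/2}$ translate into perfect-power requirements on balancing and Lucas-balancing numbers, and the classification of perfect powers in both sequences then rules out any solution with $m\ge 1$. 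This last case is the main obstacle: the gcd of the Lucas-balancing sequence is genuinely subtle—$\gcd(C_a,C_b)$ is \emph{not} simply $C_{\gcd(a,b)}$—so the bookkeeping of the primes $2$ and $3$ and of the cross terms $\gcd(B_a,C_b)$ must be handled carefully before the perfect-power results can be brought to bear.
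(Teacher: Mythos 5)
Your factorization $B_n^2-B_m^2=B_{n+m}B_{n-m}$, the reduction from $\gcd(B_n,B_m)=1$ to $\gcd(n,m)=1$, and the dichotomy $\gcd(n+m,n-m)\in\{1,2\}$ are exactly the skeleton of the paper's proof, and your handling of the coprime-factor case (via the strong divisibility property and the classification of perfect powers among balancing numbers, Lemma \ref{lembal}) is complete and correct. The genuine gap is the case $\gcd(n+m,n-m)=2$: there you only announce a program (``I would carry out a valuation analysis at the primes $2$ and $3$ \dots I would control and peel off the powers \dots''), and you yourself flag it as ``the main obstacle'' without resolving it. That case cannot be waved through, since it is precisely where the second solution $(2,0,6,2)$ lives. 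The paper closes it far more directly: from $\gcd(B_{n+m},B_{n-m})=B_2=6$ and $B_{n+m}B_{n-m}=x^q$ it writes $B_{n+m}=6x_1^q$ and $B_{n-m}=6^{q-1}x_2^q$ (up to swapping) and then invokes two prepared statements --- Lemma \ref{lem7} ($B_N=3^sx^b$ with $b\ge 2$ forces $N=1$) and Lemma \ref{lemlu3} ($C_N=3^ky^l$ has no solution for $N\ge 2$) --- after splitting the even index via $B_{2k}=2B_kC_k$ with $\gcd(B_k,C_k)=1$. This bypasses entirely the lifting-the-exponent bookkeeping and the ``subtle'' Lucas-balancing gcd you worry about; the needed gcd facts are all in Lemma \ref{lem4}.

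There is also a smaller but real accounting problem. You dispose of $m=0$ by ``direct inspection of $B_n^2=x^q$,'' but for $q=2$ that equation is satisfied by \emph{every} $n$ with $x=B_n$; to cut the list down to $(1,0,1,k)$ and $(2,0,6,2)$ you must say precisely how $\gcd(B_n,B_0)=1$ is being interpreted, and with the usual convention $\gcd(B_n,0)=B_n$ it would exclude even $n=2$. Moreover, your parametrization of the $\gcd(n+m,n-m)=2$ branch as ``$n,m$ both odd'' presupposes $m\ge 1$, so the solution $(2,0,6,2)$ --- which in the paper emerges from that very branch as $n+m=n-m=2$ --- would be missed by your argument even if the valuation analysis were completed. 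You need either to let the degenerate index pair $(n,m)=(2,0)$ live inside that branch, as the paper does, or to carry out the $m=0$ inspection honestly.
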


\begin{theorem}\label{thm3}
	
	The only solution of the Diophantine equation 
	\begin{equation}\label{eq13}
	B_n^3 \pm B_m^3 = x^q \quad\mbox{with}\quad \gcd(B_n, B_m) =1, \quad q \geq 3
	\end{equation}
	in integers $(n, m, x, q)$ with $n > m \geq 0 $ and $x>0$ is
	$(n, m, x, q)$ = $(1, 0, 1, k)$, with $k \geq 3$.
	
\end{theorem}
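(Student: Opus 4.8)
The plan is to combine the multiplicative structure of the two factors of $B_n^3\pm B_m^3$ with a descent in the ring of Eisenstein integers, reducing everything to a short list of exponents treated individually. First I would dispose of the divisibility hypothesis. Since the balancing sequence is the Lucas sequence of the first kind $U_k(6,1)=\frac{\al^k-\be^k}{4\sqrt2}$ with coprime parameters, it is a strong divisibility sequence, so $\gcd(B_n,B_m)=B_{\gcd(n,m)}$; as $B_d=1$ only for $d=1$ and $B_0=0$, the condition $\gcd(B_n,B_m)=1$ forces either $\gcd(n,m)=1$ with $m\geq1$, or $m=0$ and $n=1$. The case $m=0$ gives $B_n^3=x^q$ with $B_n=1$, i.e. the family $(1,0,1,k)$ in the statement; so from now on I assume $1\leq m<n$ and $\gcd(n,m)=1$ and aim to show there is no further solution.

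Next I would reduce the exponent. Writing $q=q_1q_2$ with $q_1$ prime and absorbing $x^{q_2}$ into the base, it suffices to treat prime exponents, and since every $q\geq3$ has a prime factor it is enough to rule out $m\geq1$ solutions for each odd prime $q\geq3$ (covering all odd $q$) and for $q=2$ (covering all even $q\geq4$, where $x^q$ is a perfect square). I would then factor
\[B_n^3\pm B_m^3=(B_n\pm B_m)\bigl(B_n^2\mp B_nB_m+B_m^2\bigr),\]
and, using $B_n^2\mp B_nB_m+B_m^2=(B_n\pm B_m)^2\mp3B_nB_m$ together with $\gcd(B_n,B_m)=1$, show that the greatest common divisor $d$ of the two factors divides $3$. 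Consequently each of $B_n\pm B_m$ and $B_n^2\mp B_nB_m+B_m^2$ equals a perfect $q$-th power times a controlled power of $3$.

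For $q=3$ the equation $B_n^3\pm B_m^3=x^3$ is an instance of Fermat's equation for exponent three, so Fermat's Last Theorem immediately forbids $B_m\geq1$. For an odd prime $q\geq5$ I would work in the Eisenstein integers $\mathbb{Z}[\om]$, $\om=\frac{-1+\sqrt{-3}}2$, where $B_n^2\mp B_nB_m+B_m^2$ is the norm of $B_n\pm\om B_m$. After clearing the prime above $3$, coprimality makes the two conjugate factors coprime, and since $\mathbb{Z}[\om]$ is a principal ideal domain whose six units are all $q$-th powers when $\gcd(q,6)=1$, one obtains $B_n\pm\om B_m=(a+b\om)^q$ for integers $a,b$. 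Expanding and comparing the coefficient of $\om$ expresses $B_m$ as $b$ times a binary form in $a,b$, so in particular $b\mid B_m$; a coprimality and descent argument then forces $|b|=1$, after which $B_m$ must equal a specific value of a rapidly growing integer sequence, leaving only finitely many small candidates to test against the balancing recurrence.

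The main obstacle is making this descent uniform in the exponent. The assertion that $B_n\pm\om B_m$ is a genuine $q$-th power must be paired with an effective bound on $q$ and $n$: I expect either to invoke the modularity of the Frey curve attached to the ternary equation $A^3+B^3=C^q$ in order to eliminate all large primes simultaneously, or to derive a Baker-type linear-forms-in-logarithms estimate bounding $n$ for each fixed $q$ and then finish by a finite computer search. The careful bookkeeping of the factor $3$ in the case $d=3$, and the separate signature-$(3,3,2)$ analysis required for $q=2$ (sum or difference of two coprime cubes equal to a square), are the remaining delicate points; once $n$ is bounded the residual cases close by direct verification, confirming that the only solution is $(1,0,1,k)$ with $k\geq3$.
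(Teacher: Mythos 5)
There is a genuine gap, and you partially acknowledge it yourself. Your Eisenstein-integer descent stalls at exactly the point where the proof has to be closed: from $B_n\pm\om B_m=(a+b\om)^q$ you get $b\mid B_m$, but nothing forces $|b|=1$ (there is no infinite descent here, $b$ can a priori be any divisor of $B_m$), and even granting $|b|=1$ you are left with a two-parameter family in $(m,q)$ rather than ``finitely many small candidates.'' You then defer to ``either the modularity of the Frey curve for $A^3+B^3=C^q$ or a Baker-type bound plus computer search'' without carrying out either, so the argument for odd primes $q\geq 5$ (and for the signature $(3,3,2)$ case you would need when $q$ is a power of $2$) is not actually supplied. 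The paper does import the resolved ternary equation $x^3+y^3=z^p$ (Kraus for $p\geq 17$, Bruin and Dahmen for $p=5,7,11,13$, classical for $p=3$) as a black box, but only to extract local data: if a nontrivial solution existed one would have $3\mid x$, and $2\mid a$ with $4\nmid a$ for the even cube.

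The second, larger omission is that your proposal never uses any specific arithmetic of the balancing sequence beyond strong divisibility, and that is where the paper's proof actually lives. For $n\not\equiv m\pmod 2$, the paper combines the local data above with the elementary factor lemma ($B_n\pm B_m=c^q$ or $3^{q-1}c^q$) to force $9\mid(B_n\pm B_m)$, and then kills this with the period-$12$ behaviour of $B_n$ modulo $9$ (Lemma \ref{lemper}), which is incompatible with $n,m$ of opposite parity. For $n\equiv m\pmod 2$, it uses the identity $B_n\pm B_m=2B_{(n\pm m)/2}C_{(n\mp m)/2}$, the coprimality statements of Lemma \ref{lem4}, and the classification of balancing and Lucas-balancing numbers of the form $3^s x^b$ (Lemmas \ref{lem7} and \ref{lemlu3}) to show $B_n\pm B_m=3^{q-1}x^q$ is impossible. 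None of these finishing steps appears in your outline, so as written the proposal establishes the trivial family $(1,0,1,k)$ and the case $q=3$, but not the theorem.
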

We organise this paper as follows. In Section \ref{sec2}, we recall and prove some results that will be useful for the proofs of main theorems. In Section \ref{sec3}, we will prove Theorem \ref{thm1}-\ref{thm3}. Finally, we finish this paper with a concluding remark.

 \section{Auxiliary results}\label{sec2}

\begin{lemma}\label{lem1}
Assume that $n \equiv m \pmod 2$. Then
\[B_n+B_m = 2 B_{(n+m)/2}C_{(n-m)/2}.\]
Similarly,
\[B_n-B_m = 2 B_{(n-m)/2}C_{(n+m)/2}.\] 
\end{lemma}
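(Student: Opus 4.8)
The plan is to prove both identities directly from the Binet formulas in \eqref{eqbinet}, since the claimed factorizations are exactly the kind of sum-to-product relations that fall out of manipulating $\al^n \pm \be^n$. The key algebraic fact I would exploit is that $\al\be = (3+2\sqrt2)(3-2\sqrt2) = 9 - 8 = 1$, so $\al$ and $\be$ are multiplicative inverses. This is what makes the parity hypothesis $n \equiv m \pmod 2$ essential: it guarantees that $(n+m)/2$ and $(n-m)/2$ are integers, so that $B_{(n+m)/2}$, $C_{(n-m)/2}$, etc., are genuine terms of the sequences and the right-hand sides make sense.

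For the first identity, I would start from the right-hand side and expand using \eqref{eqbinet}:
\[
2 B_{(n+m)/2} C_{(n-m)/2} = 2 \cdot \frac{\al^{(n+m)/2} - \be^{(n+m)/2}}{4\sqrt2} \cdot \frac{\al^{(n-m)/2} + \be^{(n-m)/2}}{2}.
\]
Multiplying out the numerators gives four terms $\al^{(n+m)/2}\al^{(n-m)/2}$, $\al^{(n+m)/2}\be^{(n-m)/2}$, $-\be^{(n+m)/2}\al^{(n-m)/2}$, and $-\be^{(n+m)/2}\be^{(n-m)/2}$. The first and last combine the exponents additively to give $\al^n$ and $-\be^n$, while the two cross terms carry exponents whose sum of $\al$- and $\be$-powers collapses once I use $\al\be = 1$ to rewrite, say, $\be^{(n-m)/2} = \al^{-(n-m)/2}$. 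The cross terms should cancel against each other, leaving $(\al^n - \be^n)/(4\sqrt2) + (\al^m - \be^m)/(4\sqrt2) = B_n + B_m$ after collecting the $4\sqrt2$ denominator. I would carry out the bookkeeping to confirm the two cross terms indeed recombine into the $B_m$ contribution rather than vanishing entirely.

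The second identity is handled identically by swapping the roles of the factors: expand $2 B_{(n-m)/2} C_{(n+m)/2}$, where now the $\al^n$ and $\be^n$ terms combine with a relative sign that produces $B_n - B_m$. The main obstacle, such as it is, is purely clerical: keeping the four cross-terms and the $\sqrt2$ denominators straight and correctly identifying which pairs recombine into $B_n$ versus $B_m$ (or their difference). There is no conceptual difficulty, and no appeal to the recurrence \eqref{eq2} is needed, only the closed-form Binet expressions and the single relation $\al\be = 1$. As a sanity check I would verify a small case numerically, e.g. $n = 3$, $m = 1$, where $B_3 + B_1 = 35 + 1 = 36 = 2 B_2 C_1 = 2 \cdot 6 \cdot 3$, matching the identity.
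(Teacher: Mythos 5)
Your proof is correct, and the computation does go through: with $A=(n+m)/2$, $B=(n-m)/2$ one gets $(\al^{A}-\be^{A})(\al^{B}+\be^{B})=\al^{n}+\al^{m}-\be^{m}-\be^{n}$ using $\al\be=1$, and the prefactor $2\cdot\frac{1}{4\sqrt{2}}\cdot\frac{1}{2}=\frac{1}{4\sqrt{2}}$ yields exactly $B_n+B_m$; the second identity works the same way with $\al^{-m}=\be^{m}$. One small slip in your narration: the two cross terms do \emph{not} cancel against each other --- they combine into $\pm(\al^{m}-\be^{m})$, i.e.\ the $B_m$ contribution --- but you catch this yourself a sentence later, so it is not a gap. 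Your route differs from the paper's only in packaging: the paper cites the addition formulas $B_{x+y}=B_xC_y+C_xB_y$ and $B_{x-y}=B_xC_y-C_xB_y$ from an earlier reference and substitutes $x+y=n$, $x-y=m$, whereas you prove the same content from scratch via the Binet forms. Your version is more self-contained (it needs nothing beyond \eqref{eqbinet} and $\al\be=1$), at the cost of redoing a standard identity; the paper's is shorter but leans on the cited result. Both correctly isolate the role of the parity hypothesis, namely that $(n\pm m)/2$ must be integers.
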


\begin{proof}
By \cite[Theorem 2.5]{gkp}, we know that if $x$ and $y$ are natural numbers, then 
\begin{equation*}
B_{x+y} = B_xC_y + C_xB_y
\end{equation*} and for $x>y$
\begin{equation*}
B_{x-y} = B_xC_y - C_xB_y.
\end{equation*}	
 Setting $x+y = n$ and $x-y = m$ in the above equations and since $n \equiv m \pmod 2$, we get
 \[ B_n + B_m = 2 B_{(n+m)/2}C_{(n-m)/2}.\]
 and  \[B_n-B_m = 2 B_{(n-m)/2}C_{(n+m)/2}.\] 
	\end{proof}
Before proceeding further, we define two more binary recurrence sequences which are related to balancing sequence. The Pell sequence $(P_n)_{n\geq 0}$  is defined recursively as 
\[P_{n+1} = 2P_n + P_{n-1}, \quad \mbox{for}\;\; n= 1, 2, \ldots\] with initial values $P_0=0,\; P_1=1$ and  the associated Pell sequence $(Q_n)_{n\geq 0}$ is defined as 
\[Q_{n+1} = 2Q_n + Q_{n-1}, \quad \mbox{for}\;\; n= 1, 2, \ldots\] with initial values $Q_0=1,\; Q_1=1$. \begin{lemma}[Theorem 3.1, \cite{pkr11}]\label{lemprod}
For $n =0, 1, \ldots$
\begin{equation}\label{eqprod1}
B_m=P_m Q_m
\end{equation}
where $P_m$ and $Q_m$ are the $m$-th Pell and the $m$-th associated Pell numbers, respectively.
\end{lemma}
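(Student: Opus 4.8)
The plan is to deduce the factorization directly from closed-form (Binet) expressions, since all three sequences are binary recurrences sharing a common pair of quadratic surds. First I would introduce $\ga = 1+\sqrt{2}$ and $\del = 1-\sqrt{2}$, the roots of $x^2 - 2x - 1 = 0$, which is the characteristic polynomial of both Pell recurrences. Solving the standard linear system determined by the initial values yields $P_m = (\ga^m - \del^m)/(2\sqrt{2})$ for the Pell numbers; and since the data $Q_0 = Q_1 = 1$ force equal coefficients $1/2$, one obtains the compact form $Q_m = (\ga^m + \del^m)/2$ for the associated Pell numbers.

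The key observation is the relation between the two sets of surds: squaring gives $\ga^2 = 3 + 2\sqrt{2} = \al$ and $\del^2 = 3 - 2\sqrt{2} = \be$, precisely the base quantities appearing in the Binet formula \eqref{eqbinet} for $B_m$. With this in hand I would simply multiply the two closed forms:
\[
P_m Q_m = \frac{\ga^m - \del^m}{2\sqrt{2}} \cdot \frac{\ga^m + \del^m}{2} = \frac{\ga^{2m} - \del^{2m}}{4\sqrt{2}} = \frac{\al^m - \be^m}{4\sqrt{2}} = B_m,
\]
where the last equality is exactly \eqref{eqbinet}. This settles the claim.

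There is essentially no hard step here; the only point requiring a little care is verifying the Binet coefficients for the associated Pell sequence, whose initial data $Q_0 = Q_1 = 1$ differ from the more common Pell--Lucas normalization $Q_0 = Q_1 = 2$. An alternative, surd-free route checks the base cases $m = 0, 1$ directly and then argues that, since $P_m$ is built from the difference and $Q_m$ from the sum of the same pair of powers, the cross terms cancel in the product, so that $P_m Q_m$ obeys the balancing recurrence \eqref{eq2}; a short induction then finishes the proof. I would favor the Binet approach for its brevity.
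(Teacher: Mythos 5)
Your proof is correct and complete. Note that the paper itself offers no argument for this lemma; it simply cites Theorem 3.1 of Panda--Ray \cite{pkr11}, so any self-contained verification is a welcome addition rather than a deviation. Your Binet computation is the standard one: the coefficients $P_m=(\ga^m-\del^m)/(2\sqrt{2})$ and $Q_m=(\ga^m+\del^m)/2$ with $\ga=1+\sqrt{2}$, $\del=1-\sqrt{2}$ follow correctly from the stated initial values (in particular you rightly flag the normalization $Q_0=Q_1=1$ rather than $2$), the identities $\ga^2=\al$ and $\del^2=\be$ are exact, and the difference-of-squares step immediately reproduces \eqref{eqbinet}. A quick sanity check ($P_2Q_2=2\cdot 3=6=B_2$, $P_3Q_3=5\cdot 7=35=B_3$) confirms the indexing. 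The only cosmetic issue is inherited from the paper: the lemma quantifies over $n$ but states the identity in $m$; your proof consistently uses $m$, which is the right reading.
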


Note that except $B_1 =1$, there are no other perfect powers in the sequence of balancing numbers.
\begin{lemma}[Prop. 3.1, \cite{dr}]\label{lembal}
For any positive integers $y$ and $l\geq 2$,  the equation 
\begin{equation}\label{eq3}
B_m=y^l
\end{equation}
has no solution for integers $m\geq 2$.
\end{lemma}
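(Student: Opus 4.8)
The plan is to exploit the product formula of Lemma \ref{lemprod}, namely $B_m = P_m Q_m$, in order to convert the single perfect-power equation $B_m = y^l$ into a pair of independent perfect-power conditions on the Pell and associated Pell numbers. The starting point is that $P_m$ and $Q_m$ are coprime: the associated Pell identity $Q_m^2 - 2P_m^2 = (-1)^m$ shows that any common divisor of $P_m$ and $Q_m$ must divide $1$, so $\gcd(P_m, Q_m) = 1$ for every $m$.

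Assuming a nontrivial solution $B_m = y^l$ with $m \geq 2$ and $l \geq 2$, I would write $y^l = P_m Q_m$. Since the two factors are coprime positive integers whose product is a perfect $l$-th power, each factor must itself be an $l$-th power; that is, $P_m = a^l$ and $Q_m = b^l$ for some positive integers $a, b$ with $y = ab$. Thus a nontrivial perfect power among the balancing numbers would force a simultaneous perfect power in both the Pell and the associated Pell sequences at the same index $m$.

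Next I would invoke the classification of perfect powers in the Pell sequence (Peth\H{o}): the only perfect powers $P_m$ with exponent $l \geq 2$ occur at $m \in \{0, 1, 7\}$, with $P_7 = 169 = 13^2$ the unique one of index at least $2$. Hence for $m \geq 2$ we are driven to $m = 7$, and because $169 = 13^2$ with $13$ prime is an $l$-th power only for $l = 2$, we would additionally need $Q_7 = 239$ to be a perfect square. Since $239$ is prime, this is impossible, and no solution with $m \geq 2$ survives.

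The main obstacle is the perfect-power classification for the Pell sequence itself: establishing that $P_m = a^l$ has no nontrivial solutions beyond $m = 7$ is not elementary and relies on Baker's theory of linear forms in logarithms to bound the exponent $l$, combined with the modular (Frey curve) method to eliminate the small exponents $l = 2, 3, 5$. Once that input is granted, the reduction above is purely arithmetic: the coprime factorization localizes the problem at a single index, and the residual case $m = 7$ is killed by the primality of $Q_7$. One could instead attack $B_m = y^l$ directly by the same Baker-plus-modular machinery applied to the Binet form $B_m = (\al^m - \be^m)/(4\sqrt{2})$, but the Pell reduction is more economical since it reuses the known Pell result rather than redeveloping the modular apparatus for the balancing sequence.
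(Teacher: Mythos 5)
Your argument is correct: the paper itself offers no proof of this lemma (it is imported verbatim from \cite{dr}), and your reduction via $B_m=P_mQ_m$ with $\gcd(P_m,Q_m)=1$, followed by the classification of perfect powers in the Pell sequence to isolate $m=7$ and the primality of $Q_7=239$ to kill it, is exactly the factorization strategy that the cited source and the paper's own proof of Lemma \ref{lem2} (via Lemma \ref{lemprod} and \cite{bdgl}) employ. No gaps; the only external input you rely on, Peth\H{o}'s theorem that $0,1,169$ are the only perfect powers among the Pell numbers, is a legitimate and correctly quoted result.
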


\begin{lemma}[Prop. 3.2, \cite{dr}]\label{lemlucbal}
For any positive integers $y$ and $l$ with $l\geq 2$,  the equation 
\begin{equation}\label{eq4}
C_n=y^l
\end{equation}
has no solutions for integers $n\geq 1$.
\end{lemma}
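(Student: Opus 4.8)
The starting point is the defining relation $C_n^2 = 8B_n^2+1$ recalled in the introduction, which exhibits $C_n$ as the $X$-coordinate of the Pell equation $X^2-8Y^2=1$. I would argue by contradiction: suppose $C_n=y^l$ with $n\ge 1$ and $l\ge 2$. Since $C_0=1,\ C_1=3$ and the recurrence $C_n=6C_{n-1}-C_{n-2}$ preserve oddness, every $C_n$ is odd, so $y$ is odd; moreover if $y=1$ then $C_n=1=C_0$ forces $n=0$, against $n\ge1$. Hence I may assume $y\ge 3$ and seek a contradiction.

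The first move is to rewrite the equation multiplicatively. From $C_n^2-1=8B_n^2$ and $C_n=y^l$ I get $(y^l-1)(y^l+1)=8B_n^2$. As $y$ is odd, both factors are even with $\gcd(y^l-1,\,y^l+1)=2$, so writing $y^l-1=2a$ and $y^l+1=2b$ produces coprime integers $a<b=a+1$ with $ab=2B_n^2$. The coprimality, together with the single factor of $2$ on the right, forces the clean split $\{a,b\}=\{s^2,\,2t^2\}$ with $B_n=st$. According to which of $a,b$ carries the factor $2$, this yields exactly the two possibilities
\[y^l=(2t)^2+1 \qquad\text{or}\qquad y^l=(2t)^2-1,\]
and in both the square is that of an \emph{even} number $2t$; this parity is the decisive structural feature.

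The endgame is to show that neither equation has a solution with $y\ge 3$, $l\ge2$. For $y^l=(2t)^2-1=(2t-1)(2t+1)$ the two factors are coprime odd numbers, hence each is an $l$-th power, say $2t-1=c^l$ and $2t+1=d^l$, so that $d^l-c^l=2$ with $d>c\ge1$; a direct estimate gives $d^l-c^l\ge 2^{\,l-1}$, which already exceeds $2$ for $l\ge3$, while the borderline $l=2$ reduces to $(d-c)(d+c)=2$ and is impossible. For $y^l=(2t)^2+1$ I would pass to the Gaussian integers and factor $(2t+i)(2t-i)=y^l$; the two factors are coprime in $\Z[i]$ (their gcd divides $2i$, while each has odd norm $y^l$), so $2t+i=u(p+qi)^l$ for a unit $u$, and comparing imaginary parts forces $q=\pm1$ and then $t=0$, i.e.\ $y=1$ — this is the classical theorem of Lebesgue on $X^2+1=y^l$. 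Either outcome contradicts $y\ge3$. Equivalently, both equations read $\lvert y^l-(2t)^2\rvert=1$, so they follow in one stroke from Mih\u{a}ilescu's theorem on consecutive perfect powers, the exceptional pair $3^2-2^3=1$ being excluded here because the square side $(2t)^2$ is even (and $y$ is odd).

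I expect the only genuinely delicate point to be justifying the coprime factorization $\{a,b\}=\{s^2,2t^2\}$ together with the evenness of the resulting base $2t$: once that parity is secured, the case $y^l=(2t)^2-1$ is entirely elementary, and the case $y^l=(2t)^2+1$ rests on a single classical input (Lebesgue, or Mih\u{a}ilescu), which is the deepest ingredient the argument requires.
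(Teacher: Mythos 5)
Your proof is correct, but note that the paper itself contains no proof of this lemma: it is imported wholesale as Proposition 3.2 of Dey--Rout \cite{dr}, whose argument reduces the statement to classical results on perfect powers attached to the Pell equation $X^2-8Y^2=1$ (results of Cohn/Ljunggren/Peth\H{o} type on Pell and associated Pell numbers, in the same spirit as Lemmas \ref{lemprod} and \ref{lem2} of this paper). Your argument is therefore a genuinely different and essentially self-contained route. Its key steps all check out: the splitting $\{a,b\}=\{s^2,2t^2\}$ with $B_n=st$ is justified exactly as you suspect it must be (the odd factor has all prime exponents even, while the even factor absorbs the single $2$ together with even exponents elsewhere, since $v_2(2B_n^2)$ is odd); the case $y^l=(2t)^2-1$ is killed by the elementary estimate on $d^l-c^l=2$ (indeed, since $c$ and $d$ are both odd one even has $d\geq c+2$, making the bound trivial); and the case $y^l=(2t)^2+1$ is precisely Lebesgue's equation, for which citing Lebesgue or Mih\u{a}ilescu is legitimate --- the same kind of black-box citation the paper itself uses elsewhere (e.g.\ Lemma \ref{lem5} cites Kraus, Bruin and Dahmen). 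What the citation-based route buys is brevity and uniformity with the companion statement for $B_n$ (Lemma \ref{lembal}); what your route buys is transparency, since apart from one classical theorem everything is elementary and one sees exactly where the Pell structure $C_n^2-8B_n^2=1$ enters. The only step you should not leave as a one-line gesture in a finished write-up is the Gaussian-integer sketch (``comparing imaginary parts forces $q=\pm1$ and then $t=0$''), which compresses several lines of the classical argument; either invoke Lebesgue's theorem outright as a citation or write that computation out in full.
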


\begin{lemma}\label{lem2}
If
\begin{equation}\label{eq6}
 B_n = 2^sx^b. 
\end{equation}
for some integers $n \geq 1$, $x \geq 1$, $ b \geq 2$ and $ s \geq 0$, then $ n = 1$.	 
\end{lemma}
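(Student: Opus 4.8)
The plan is to split the argument according to the parity of $n$, reducing each case to one of the non-existence results for perfect powers already established. The only input about $2$-adic behaviour I need is the elementary congruence $B_n\equiv n\pmod 2$, which follows immediately from the recurrence \eqref{eq2} together with $B_0=0$, $B_1=1$ (the sequence runs even, odd, even, odd, $\dots$). Thus $B_n$ is odd exactly when $n$ is odd, and this dichotomy drives everything.

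Suppose first that $n$ is odd. Then $B_n$ is odd, so in the relation $B_n=2^sx^b$ the factor $2^s$ must be trivial: if $s\geq 1$ the right-hand side would be even. Hence $s=0$ and $B_n=x^b$ is a genuine perfect $b$-th power with $b\geq 2$. By Lemma \ref{lembal} this is impossible for $n\geq 2$, so $n=1$, which indeed gives $B_1=1$. This disposes of the odd case completely and uses only Lemma \ref{lembal}.

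The substantive case is $n$ even, say $n=2k$ with $k\geq 1$. Here I would use the duplication formula $B_{2k}=2B_kC_k$, which is the addition formula $B_{x+y}=B_xC_y+C_xB_y$ (recorded in the proof of Lemma \ref{lem1}) specialised to $x=y=k$. The key structural facts are that $C_k$ is odd and coprime to $B_k$; both follow at once from the defining identity $C_k^2-8B_k^2=1$, since any common prime divisor of $B_k$ and $C_k$ would divide $1$, and $C_k^2$ is odd. Consequently, in the factorisation $B_{2k}=2B_kC_k$ the Lucas-balancing factor $C_k$ is an odd divisor coprime to the complementary factor $2B_k$. Writing $B_n=2^sx^b$ and stripping off the power of $2$, the odd part of $B_n$ equals $(\text{odd part of }x)^b$, a perfect $b$-th power; since $C_k$ is a coprime odd factor of that odd part, a standard coprime-factorisation argument forces $C_k$ itself to be a perfect $b$-th power, $C_k=y^b$. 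But $k\geq 1$ and $b\geq 2$, so this contradicts Lemma \ref{lemlucbal}. Hence no even $n\geq 2$ can occur.

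I expect the main obstacle to be the bookkeeping in the even case: one must argue carefully that extracting the power of $2$ and then isolating the coprime odd factor $C_k$ really yields $C_k=y^b$ on the nose, rather than merely $C_k$ times a stray $2$-power or unit. This is exactly where coprimality of $C_k$ with $2B_k$ and the oddness of $C_k$ are used, and it is the only place the $2$-adic valuation enters. Combining the two cases, the sole surviving solution is $n=1$, as claimed. (Alternatively, the even case can be run through the product formula $B_n=P_nQ_n$ of Lemma \ref{lemprod} together with the identity $Q_{2k}=C_k$, but the duplication-formula route keeps everything inside the balancing/Lucas-balancing framework of the earlier lemmas.)
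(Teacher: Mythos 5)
Your proof is correct, but it follows a genuinely different route from the paper's. The paper factors $B_n=P_nQ_n$ via Lemma \ref{lemprod}, uses $\gcd(P_n,Q_n)=1$ to split $2^sx^b$ between the two factors, and then invokes an external result of Bravo--Das--Guzm\'an--Laishram (\cite[Lemma 2.6]{bdgl}) on the equations $P_n=2^sy^b$ and $Q_n=2^sy^b$ to get $n\in\{1,2,7\}$ and finally $n=1$. You instead split on the parity of $n$: for odd $n$ the congruence $2\mid B_n\iff 2\mid n$ (Lemma \ref{lemper}) kills the factor $2^s$ and Lemma \ref{lembal} finishes; for even $n=2k$ you use the duplication formula $B_{2k}=2B_kC_k$ (already used in the paper's proof of Proposition \ref{prop1}), the identity $C_k^2-8B_k^2=1$ to see that $C_k$ is odd and coprime to $2B_k$, and a comparison of odd parts to force $C_k=y^b$, contradicting Lemma \ref{lemlucbal}. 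All the steps check out: the coprime-factorization step you flag as the delicate point is handled correctly by comparing $p$-adic valuations at the odd primes dividing $C_k$. What your approach buys is self-containment --- it stays entirely within the balancing/Lucas-balancing lemmas quoted from \cite{dr} and avoids the Pell-sequence perfect-power classification of \cite{bdgl} altogether; what the paper's approach buys is brevity, since the citation does the case analysis in one stroke. Either argument would be acceptable here.
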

\begin{proof}
By Lemma \ref{lemprod}, we have $B_n = P_nQ_n$ and note that $\gcd(P_n ,Q_n)=1$ (see \cite[Chapter 7]{koshy}). Let $x=x_1x_2$ with $\gcd$$(x_1,x_2)$=1. Then from \eqref{eq6}, we get \[P_nQ_n = 2^sx_1^bx_2^b.\] So, we have the following cases: $P_n = x_1^b, \; Q_n = 2^sx_2^b$ and $P_n = 2^sx_1^b,\; Q_n = x_2^b$. If $P_n = x_1^b$ and $Q_n = 2^sx_2^b$,  then by  \cite[Lemma 2.6]{bdgl}  $n= 1$. In the later case, by  \cite[Lemma 2.6]{bdgl} we have $n \in \{1,2,7\}$ and among these values of $n$, only $n =1$ satisfies $Q_n =x_2^b$. This completes the proof of lemma.
\end{proof}

\begin{lemma}\label{lem3}
	If
	\begin{equation}\label{eq7}
	C_n = 2^sx^b. 
	\end{equation}
for some integers 	$ n \geq 1$, $x \geq 1$, $ b \geq 2$ and $ s \geq 0$, then no solution exists.	 
\end{lemma}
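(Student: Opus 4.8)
The plan is to reduce equation \eqref{eq7} to the pure perfect-power equation $C_n = y^l$ that has already been disposed of in Lemma \ref{lemlucbal}; the entire content of the argument rests on a parity observation for the Lucas-balancing numbers.

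First I would establish that $C_n$ is odd for every $n \geq 0$. Reducing the recurrence \eqref{eq11} modulo $2$ and using $6 \equiv 0 \pmod 2$ gives $C_n \equiv -C_{n-2} \equiv C_{n-2} \pmod 2$, so the parity along each arithmetic progression of indices is constant. Since $C_0 = 1$ and $C_1 = 3$ are both odd, a one-line induction (separately on even and odd indices) shows $C_n \equiv 1 \pmod 2$ for all $n$.

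With this in hand, suppose $C_n = 2^s x^b$ as in \eqref{eq7}, with $n \geq 1$, $x \geq 1$, $b \geq 2$ and $s \geq 0$. If $s \geq 1$ the right-hand side is even, contradicting the oddness of $C_n$; hence $s = 0$, and the equation collapses to $C_n = x^b$ with $b \geq 2$ and $n \geq 1$. This is precisely equation \eqref{eq4}, which by Lemma \ref{lemlucbal} admits no solutions. Therefore \eqref{eq7} has no solutions either, completing the proof.

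I do not anticipate a genuine obstacle in this lemma: the only point requiring care is the parity claim, and once that forces $s = 0$ the result is an immediate consequence of the already-established Lemma \ref{lemlucbal}. Unlike the proof of Lemma \ref{lem2}, no coprime factorization of $C_n$ into Pell-type factors is needed, since here the factor $2^s$ is simply ruled out by congruence considerations rather than split across two coprime components.
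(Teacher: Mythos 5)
Your proof is correct and follows essentially the same route as the paper: both arguments rest on showing that every Lucas-balancing number is odd (you by induction on the recurrence modulo $2$, the paper by a minimal-counterexample argument) to force $s=0$. You are in fact slightly more careful than the paper's own proof, which stops at the parity observation and leaves implicit the reduction of the remaining case $C_n = x^b$ to Lemma \ref{lemlucbal}; your explicit appeal to that lemma closes the $s=0$ case cleanly.
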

\begin{proof}Recall that the Lucas-balancing sequence $(C_n)_{n \geq 0}$ with initial values $C_0$ = $1$,  $C_1$ = $3$, satisfies the recurrence relation \eqref{eq11}. First we claim that all the Lucas balancing numbers are odd. Suppose on contrary $t\geq 2$ is the smallest index such that $C_t$ is even. Then from \eqref{eq11}, we get $C_{t-2} = 6C_{t-1}-C_t$ is even, which is a contradiction. Thus, all the Lucas balancing numbers are odd integers and hence there does not exists any solution of \eqref{eq7}.
\end{proof}
The following result can be found in \cite{Mcdaniel}.
\begin{lemma}\label{lem4}
Let $n=2^an_1$ and $m=2^bm_1$ be two positive integers with $n_1$ and $m_1$ odd integers and $a$ and $b$ non-negative integers. Let $d=\gcd(n,m)$. Then 
	\begin{enumerate}
		\item $\gcd (B_n,B_m)$ = $B_d,$
		\item $\gcd (C_n,C_m)$ = $C_d$ if $a=b$ and is 1  otherwise,
		\item $\gcd (B_n,C_m)$ = $C_d$ if $a>b$ and is 1  otherwise.
	\end{enumerate}
\end{lemma}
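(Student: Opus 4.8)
The plan is to recognize $(B_n)$ and $(C_n)$ as a Lucas sequence together with half of its companion sequence attached to $\alpha=3+2\sqrt2$, $\beta=3-2\sqrt2$ (so $\alpha+\beta=6$, $\alpha\beta=1$, and $2C_n=\alpha^n+\beta^n$), and to prove the three statements by the standard rank-of-apparition plus Euclidean-reduction technique, with the powers of $2$ controlled by the $2$-adic valuations $a=v_2(n)$ and $b=v_2(m)$. Two facts will be the backbone of everything. First, from the defining relation $C_n^2-8B_n^2=1$ one immediately gets $\gcd(B_n,C_n)=1$ for every $n$; and by Lemma \ref{lem3} every $C_n$ is odd. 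Second, the addition formula of \cite{gkp}, namely $B_{x+y}=B_xC_y+C_xB_y$, together with its companion analogue $C_{x+y}=C_xC_y+8B_xB_y$ and the doubling identities $B_{2k}=2B_kC_k$ and $C_{2k}=2C_k^2-1$, will let me reduce indices modulo the relevant term.

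I would first prove $(1)$, the strong-divisibility property. Writing $B_n=B_{(n-m)+m}=B_{n-m}C_m+C_{n-m}B_m$ gives $\gcd(B_n,B_m)=\gcd(B_{n-m}C_m,B_m)=\gcd(B_{n-m},B_m)$, where the last equality uses $\gcd(C_m,B_m)=1$. Iterating this as a Euclidean descent on the indices then yields $\gcd(B_n,B_m)=B_{\gcd(n,m)}=B_d$, exactly as for Fibonacci numbers.

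Next I would establish the two divisibility criteria that drive $(2)$ and $(3)$: that $C_k\mid C_l$ whenever $l/k$ is an odd integer, and $C_k\mid B_l$ whenever $2k\mid l$. Both follow by induction from the addition formulas reduced modulo $C_k$: since $C_{2k}\equiv-1$ and $B_{2k}\equiv0\pmod{C_k}$, one finds $C_{l+2k}\equiv-C_l$ and $B_{l+2k}\equiv-B_l\pmod{C_k}$, so these residues are antiperiodic with period $2k$ and their relevant zeros sit exactly at the claimed indices. Feeding in the $2$-adic bookkeeping then gives the ``at least'' direction in each case. With $d=\gcd(n,m)=2^{\min(a,b)}\gcd(n_1,m_1)$, when $a=b$ both $n/d$ and $m/d$ are odd, so $C_d\mid C_n$ and $C_d\mid C_m$, whence $C_d\mid\gcd(C_n,C_m)$; and when $a>b$ we have $v_2(d)=b$, so $m/d$ is odd while $2d\mid n$, giving $C_d\mid C_m$ and $C_d\mid B_n$, whence $C_d\mid\gcd(B_n,C_m)$.

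The hard part will be the reverse direction, namely showing the gcd is no larger than $C_d$ (respectively equals $1$ in the exceptional parities $a\neq b$ for $(2)$ and $a\leq b$ for $(3)$). The clean device is to route the companion gcd through $B$: from $(1)$, $\gcd(B_{2n},B_{2m})=B_{2d}=2B_dC_d$, while $B_{2n}=2B_nC_n$ and $B_{2m}=2B_mC_m$, so $\gcd(B_nC_n,B_mC_m)=B_dC_d$. Using the coprimalities $\gcd(B_n,C_n)=\gcd(B_m,C_m)=1$ one then disentangles $\gcd(C_n,C_m)$ and $\gcd(B_n,C_m)$ from this product, while the oddness of every $C_n$ removes the spurious factor of $2$ that would otherwise survive (this is the origin of the ``or is $1$'' alternative in the statement). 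Carrying out this disentangling together with the full case split on $a$ versus $b$ is exactly where the real work lies; it is precisely the content of McDaniel's theorem \cite{Mcdaniel}, which is why I would ultimately invoke that result rather than reproduce the lengthy $2$-adic case analysis in full.
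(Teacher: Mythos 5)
The paper offers no proof of this lemma at all---it simply states that the result ``can be found in'' McDaniel \cite{Mcdaniel}---and your proposal ultimately rests on the same citation, invoking McDaniel for the decisive $2$-adic case analysis while the parts you do write out (the Euclidean descent for (1) via $B_{x+y}=B_xC_y+C_xB_y$ and $\gcd(B_m,C_m)=1$, the antiperiodicity of $B$ and $C$ modulo $C_k$, and the factorization $\gcd(B_nC_n,B_mC_m)=B_dC_d$) are correct but are scaffolding the paper omits. So your route is essentially the paper's, just with more of the standard machinery made explicit.
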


\begin{lemma}\label{lem5}
Let p be a prime. If (a,b,c) is an integer solution of the equation 
	\[x^3+y^3=z^p, \quad p \geq 3\]
with $\gcd(a, b) =1, abc \neq 0$ and $2|ac$. Then $3|c $ and $2|a$ but $4  \nmid a$.	 
\end{lemma}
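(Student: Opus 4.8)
The plan is to start from the factorization $a^3+b^3=(a+b)(a^2-ab+b^2)=c^p$ and to control the primes $2$ and $3$ separately. First I would record that $d:=\gcd(a+b,\,a^2-ab+b^2)$ divides $3$: indeed $a^2-ab+b^2=(a+b)^2-3ab$, and $\gcd(a+b,ab)=1$ because $\gcd(a,b)=1$, so any common factor of the two factors divides $3ab$ and hence $3$. Since $a^2-ab+b^2\equiv(a+b)^2\pmod 3$, one has $3\mid a^2-ab+b^2$ iff $3\mid a+b$, and in that case $v_3(a^2-ab+b^2)=1$ exactly (using $3\nmid b$). Thus $3\mid c$ is equivalent to $3\mid a+b$, and otherwise the two factors are coprime, so each of them is, up to sign, a $p$-th power.

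Next I would run the parity analysis. Coprimality forbids $a,b$ both even, and the hypothesis $2\mid ac$ rules out the case $a$ odd, $b$ even: there $a^3+b^3$ is odd, so $c$ is odd and $ac$ is odd, contradicting $2\mid ac$. This leaves two configurations: $a$ even with $c$ odd, or $a,b$ both odd with $c$ even (the latter since $a^2-ab+b^2$ is odd, whence $v_2(c^p)=v_2(a+b)\ge 1$). The desired conclusion $2\mid a$, $4\nmid a$ asserts precisely that we are in the first configuration, and moreover that $v_2(a)=1$.

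To upgrade these necessary conditions to the stated conclusions I would pass to the factorization $c^p=(a+b)(a+\omega b)(a+\omega^2 b)$ in $\mathbb{Z}[\omega]$, with $\omega$ a primitive cube root of unity and $\mathbb{Z}[\omega]$ of class number one; the factors are coprime away from the prime $\lambda=1-\omega$ above $3$, so $a+\omega b$ is a unit times a $p$-th power times a controlled power of $\lambda$, and the parity and $3$-adic data above then translate into the claimed divisibilities. The hard part will be excluding the three ``bad'' configurations --- $c$ even (both $a,b$ odd), $v_2(a)\ge 2$, and $3\nmid c$ --- since none of these can be ruled out by congruences alone, each being locally consistent at $2$ and at $3$. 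This is exactly where the arithmetic of the ternary equation $x^3+y^3=z^p$ of signature $(3,3,p)$ must enter: I would attach the Frey curve to a hypothetical solution and use modularity together with Ribet-type level lowering to see that every forbidden configuration produces a mod-$p$ Galois representation with no corresponding newform, a contradiction; alternatively one cites the corresponding structural results of Kraus and of Bennett, Vatsal and Yazdani. Once the bad configurations are eliminated, $3\mid c$, $2\mid a$ and $4\nmid a$ follow at once.
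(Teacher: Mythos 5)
Your elementary reductions (the factorization $a^3+b^3=(a+b)(a^2-ab+b^2)$, the computation $\gcd(a+b,a^2-ab+b^2)\mid 3$ with $v_3(a^2-ab+b^2)\le 1$, and the parity case analysis showing $2\mid ac$ forces either $2\mid a,\,2\nmid bc$ or $2\nmid ab,\,2\mid c$) are all correct, and your diagnosis is also correct: the actual content of the lemma cannot be extracted from congruences or from the $\mathbb{Z}[\omega]$ factorization, and one must invoke the modular method for the signature $(3,3,p)$ equation. This is in fact exactly what the paper does --- its entire proof is a citation: the case $p=3$ is ``a classical result'' (Euler's theorem that $x^3+y^3=z^3$ has no nontrivial solutions, so the statement is vacuous there), the case $p\ge 17$ is Kraus \cite{kraus}, and the cases $p=5,7,11,13$ are Bruin \cite{bruin} and Dahmen \cite{dahmen}. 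Two points in your sketch need repair, though. First, your Frey-curve/level-lowering strategy does not apply to $p=3$; that case is not covered by Kraus-type arguments and must be disposed of separately via the classical exponent-$3$ Fermat result, which you omit. Second, the reference to Bennett--Vatsal--Yazdani is misplaced: their work concerns the signature $(p,p,3)$ equation $x^p+y^p=z^3$, not $x^3+y^3=z^p$; for the small exponents $p=5,7,11,13$ the correct sources are Bruin and Dahmen, and for $p\ge 17$ Kraus's analysis of the Frey curve attached to $a^3+b^3=c^p$ is the result that actually yields the stated local conditions $3\mid c$ and $v_2(a)=1$. With those two corrections your argument matches the paper's.
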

\begin{proof}
For $p=3$, it is a classical result. When $p\geq 17$ is a prime, see \cite{kraus}. When $p= 5, 7, 11, 13$, it can be obtained from the result of Bruin \cite{bruin} and Dahmen \cite{dahmen}.
\end{proof}

The following result is an easy exercise in elementary number theory.
\begin{lemma}\label{lem6a}
	Let p be an odd prime, $x, y, z, k$ integers  with $\gcd(x, y) =1$. If
\[x^p+y^p=z^k, \quad k \geq 2,\]
then $x+y=c^k $ or $p^{k-1}c^k$ for some integer c. 
\end{lemma}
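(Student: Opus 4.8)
The plan is to exploit the factorization
\[
x^p + y^p = (x+y)\,D, \qquad D := \sum_{i=0}^{p-1} (-1)^i x^{p-1-i} y^i,
\]
and to control the greatest common divisor of the two factors on the right. Reducing $D$ modulo $x+y$, i.e.\ replacing $y$ by $-x$, every one of the $p$ terms collapses to $x^{p-1}$, so $D \equiv p\,x^{p-1} \pmod{x+y}$. Since $\gcd(x,y)=1$ forces $\gcd(x+y,x)=\gcd(y,x)=1$, this yields
\[
\gcd(x+y, D) = \gcd\bigl(x+y,\, p\,x^{p-1}\bigr) = \gcd(x+y, p) \in \{1, p\}.
\]
Thus the only prime the two factors can share is $p$, and everything reduces to tracking the exponent of $p$.

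First I would treat the case $p \nmid x+y$, where $\gcd(x+y, D)=1$. Then for every prime $q$ the exponent of $q$ in $x+y$ equals its exponent in $(x+y)D = z^k$, which is a multiple of $k$. Hence $|x+y|$ is a perfect $k$-th power, and (absorbing the sign into $c$ when $k$ is odd, while $x+y>0$ in the intended applications) we obtain $x+y = c^k$.

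The substantive case is $p \mid x+y$. Here $p \nmid x$ and $p \nmid y$, since otherwise $p$ would divide both $x$ and $y$, contradicting $\gcd(x,y)=1$. The Lifting-the-Exponent Lemma for the odd prime $p$ then gives
\[
v_p(x^p+y^p) = v_p(x+y) + 1,
\]
where $v_p$ denotes the $p$-adic valuation, so $v_p(D)=1$. Writing $\alpha := v_p(x+y)\ge 1$, the identity $v_p(z^k) = \alpha + 1$ forces $\alpha + 1 \equiv 0 \pmod k$, hence $\alpha = k\beta - 1 = (k-1) + k(\beta-1)$ for some $\beta \ge 1$ and $p^{\alpha} = p^{k-1}\,(p^{\beta-1})^k$. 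For every prime $q \ne p$ the gcd computation above shows $q \nmid D$, so $v_q(x+y)=v_q(z^k)$ is again divisible by $k$; that is, the prime-to-$p$ part of $x+y$ is a perfect $k$-th power. Collecting the factors gives $|x+y| = p^{k-1} c_0^k$, whence $x+y = p^{k-1}c^k$.

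I expect the only genuine work to be the valuation statement $v_p(D)=1$. One may either invoke Lifting-the-Exponent as above or prove it by hand: setting $y = -x + s$ with $s = x+y$ and expanding, $D \equiv p\,x^{p-1} - \tfrac{p(p-1)}{2}x^{p-2}s \pmod{s^2}$, and since $p \mid s$ while $p$ is odd, both the middle term and the $s^2$-remainder are divisible by $p^2$; thus $D \equiv p\,x^{p-1} \pmod{p^2}$ with $p \nmid x^{p-1}$, giving $v_p(D)=1$ exactly. Everything else is routine bookkeeping with prime factorizations, and the mild sign ambiguity when $k$ is even is harmless because $x+y$ is positive in all the situations where this lemma is applied.
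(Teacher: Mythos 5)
Your proof is correct. Note that the paper itself supplies no argument for this lemma --- it is introduced only with the remark that it is ``an easy exercise in elementary number theory'' --- so there is no proof in the paper to compare against; what you have written is the standard complete argument that the authors are implicitly relying on. The three ingredients (the factorization $x^p+y^p=(x+y)D$, the congruence $D\equiv p\,x^{p-1}\pmod{x+y}$ giving $\gcd(x+y,D)\in\{1,p\}$, and the valuation $v_p(D)=1$ via lifting-the-exponent or your direct expansion modulo $s^2$) are exactly right, and the bookkeeping with $\alpha\equiv k-1\pmod{k}$ correctly produces the $p^{k-1}c^k$ alternative. The one caveat you already flag --- that a product of two coprime integers equal to $z^k$ only determines each factor up to sign when $k$ is even --- is really an imprecision in the lemma's statement rather than in your proof, and it is harmless here since in every application in the paper one has $x=B_n$, $y=\pm B_m$ with $n>m$, so $x+y>0$.
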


\begin{lemma}\label{lem7}
	If
\begin{equation}\label{eq15}
	B_n = 3^sx^b. 
\end{equation}
for some integers $n\geq 1$, $x \geq 1$, $ b \geq 2$ and $ s \geq 0$, then $ n = 1$.	 
\end{lemma}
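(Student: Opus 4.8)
The plan is to mirror the structure of the proof of Lemma \ref{lem2}, factoring $B_n$ via Lemma \ref{lemprod} and tracking the prime $3$ through that factorization. By Lemma \ref{lemprod} we have $B_n = P_n Q_n$ with $\gcd(P_n, Q_n) = 1$. Assume $B_n = 3^s x^b$ for some $n \geq 1$, $x \geq 1$, $b \geq 2$, $s \geq 0$. First I would pin down which of the two coprime factors $P_n$ and $Q_n$ can absorb the power of $3$. A short computation modulo $3$ on the Pell and associated Pell recurrences shows that $3 \nmid P_n$ for all $n$: indeed $P_n \pmod 3$ is periodic, and one checks the period contains no zero residue, so $P_n$ is never divisible by $3$. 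Consequently the entire factor $3^s$ must lie in $Q_n$.

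Write $x = x_1 x_2$ with $\gcd(x_1, x_2) = 1$ and $3 \nmid x_1$, so that the coprimality of $P_n$ and $Q_n$ forces $P_n = x_1^b$ and $Q_n = 3^s x_2^b$. At this point I would invoke the same structural input used in Lemma \ref{lem2}, namely the classification of perfect powers (and powers times a fixed small prime) in the Pell and associated Pell sequences from \cite[Lemma 2.6]{bdgl}. From $P_n = x_1^b$ with $b \geq 2$ that result gives $n = 1$. It remains to rule out any further index coming from the equation $Q_n = 3^s x_2^b$; as in Lemma \ref{lem2}, the finite list of admissible $n$ produced by \cite[Lemma 2.6]{bdgl} is checked directly, and only $n = 1$ survives. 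Hence $n = 1$.

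The main obstacle is the clean separation of the factor $3^s$ onto a single side of the factorization $B_n = P_n Q_n$. This rests entirely on the claim that $3 \nmid P_n$ for all $n \geq 1$, which is what makes the coprime split unambiguous; without it one would have to handle a genuine distribution of the power of $3$ across both factors, and the appeal to \cite[Lemma 2.6]{bdgl} would no longer apply in the same direct way. Everything else is a routine transcription of the argument in Lemma \ref{lem2}, with the prime $2$ there replaced by the prime $3$ here, so I expect the verification of the periodicity claim modulo $3$ to be the only substantive step.
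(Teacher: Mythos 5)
Your reduction to a single case rests on the claim that $3 \nmid P_n$ for all $n$, and that claim is false: $P_4 = 12$ is divisible by $3$. Modulo $3$ the Pell sequence has period $8$, with residues $0,1,2,2,0,2,1,1$, so in fact $3\mid P_n$ exactly when $4\mid n$. Consequently the power of $3$ need not land in $Q_n$, and the splitting $P_n = 3^s x_1^b$, $Q_n = x_2^b$ with $s\geq 1$ is not vacuous; your argument simply omits it. This is a genuine gap, though an easily repairable one: since $\gcd(P_n,Q_n)=1$, the factor $3^s$ still lies entirely in one of the two coprime factors, and in the case you omit $Q_n$ is a pure $b$-th power, which by the classification of powers in the associated Pell sequence (the same \cite[Lemma 2.6]{bdgl} you invoke) forces $n=1$.

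For comparison, the paper makes no divisibility claim at all; it simply treats both splittings, $P_n = x_1^b,\ Q_n = 3^s x_2^b$ and $P_n = 3^s x_1^b,\ Q_n = x_2^b$. In the first it gets $n\in\{1,7\}$ from $P_n = x_1^b$ and eliminates $n=7$ by checking $Q_7=239$ against $3^s x_2^b$; in the second it concludes $n=1$ from $Q_n$ being a perfect power. Your handling of the case you do consider coincides with the paper's; what must be fixed is the false statement that $3\nmid P_n$ and the resulting missing second case.
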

\begin{proof}
Let $x=x_1x_2$ with $\gcd$$(x_1,x_2)$=1. Then from the relation $B_n = P_n Q_n$ and \eqref{eq15}, we get $P_nQ_n = 3^sx_1^bx_2^b$. We have two cases: $P_n = x_1^b$ and $Q_n= 3^s x_2^b$ or $P_n = 3^sx_1^b$ and $Q_n=  x_2^b$. So from $P_n = x_1^b$, we have $n= 1$ or $7$ and then substituting the values of $n$ in $Q_n= 3^s x_2^b$, we get $n= 1, s=0, x_2 = 1, b=0$. So, altogether $n =1$. In the case,  $P_n = 3^sx_1^b$ and $Q_n= x_2^b$ we also have $n=1$.
\end{proof}

\begin{lemma}[Prop. 3.3, \cite{dr}]\label{lemlu3}
For any positive integers $y, k$ and $l$ with $l\geq 2$,  the equation 
\begin{equation}\label{eq4}
C_n=3^ky^l
\end{equation}
has no solutions for integers $n\geq 2$.
\end{lemma}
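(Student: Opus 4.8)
The plan is to combine a $3$-adic analysis with the theory of primitive divisors of the underlying Lucas sequence. Write $V_n := \alpha^n+\beta^n = 2C_n$ and $U_n := \frac{\alpha^n-\beta^n}{\alpha-\beta} = B_n$, the companion and first Lucas sequences attached to $(P,Q)=(6,1)$, so that $U_{2n}=U_nV_n$ and $\Delta = P^2-4Q = 32$. First I would settle divisibility by $3$: from $V_0=2$, $V_1=6$ and $V_n\equiv -V_{n-2}\pmod 3$ one reads off that $3\mid C_n$ exactly when $n$ is odd. Since the hypothesis $k\ge 1$ forces $3\mid C_n$, we may assume $n$ is odd, hence $n\ge 3$. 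Next, applying the lifting-the-exponent lemma to $\alpha^n+\beta^n$ (using $3\,\|\,\alpha+\beta=6$, $3\nmid\alpha\beta=1$ and $n$ odd) gives the exact valuation $v_3(C_n)=1+v_3(n)$.

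With the valuation in hand the equation collapses to a statement about the prime-to-$3$ part of $C_n$. Writing $y=3^ty_0$ with $3\nmid y_0$, the relation $C_n=3^{k+tl}y_0^l$ together with $v_3(C_n)=1+v_3(n)$ forces $C_n=3^{1+v_3(n)}y_0^l$, so the part of $C_n$ coprime to $3$ is the perfect $l$-th power $y_0^l$. Since $C_n$ grows like $\alpha^n$ while $3^{1+v_3(n)}$ grows only polylogarithmically, $y_0\ge 2$ for $n\ge 3$. Hence it suffices to exhibit, for every odd $n\ge 3$, a prime $p\ne 3$ with $v_p(C_n)=1$: such a $p$ would satisfy $v_p(y_0^l)=1$, impossible for $l\ge 2$.

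To produce this prime I would use the factorization $V_n=U_{2n}/U_n=\prod_{d\mid 2n,\ d\nmid n}\Phi_d(\alpha,\beta)$ into homogenized cyclotomic factors. By the Bilu--Hanrot--Voutier theorem, for $2n>30$ (after discarding the finitely many exceptional pairs in the BHV tables for $(6,1)$) the term $U_{2n}$ has a primitive prime divisor $p$; it divides $V_n$ but not $U_n$, it lies in the top factor $\Phi_{2n}(\alpha,\beta)$, and its rank of apparition $2n$ divides $p\mp 1$, so $p\equiv\pm1\pmod{2n}$ and hence $p\ne 3$ once $n\ge 3$. The finitely many odd $n\in\{3,5,7,9,11,13,15\}$ below the threshold are dispatched by factoring $C_n$ directly and reading off a prime $\ne 3$ of multiplicity one, e.g. $C_3=3^2\cdot 11$, $C_5=3\cdot 19\cdot 59$, $C_7=3\cdot 113\cdot 337$.

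The main obstacle is exponent control: a primitive divisor is guaranteed, but not a priori to the first power, since $v_p(C_n)=v_p(U_{2n})$ and a Wall--Sun--Sun-type prime could divide $\Phi_{2n}(\alpha,\beta)$ to higher order, so one genuinely must rule out that the prime-to-$3$ part of $C_n$ is powerful. The robust way to close this gap, and the route I expect \cite{dr} follows, is to reduce (taking $l$ prime without loss of generality) to the two families $C_n=3^ky^2$ and $C_n=3^ky^l$ with $l\ge 3$: the first is settled by a square-class analysis of $(C_n)$ in the style of Cohn and Ribenboim--McDaniel, while the second is attacked by the modular (Frey-curve and level-lowering) method, with linear forms in logarithms supplying the effective bound on $n$ that renders the residual computation finite.
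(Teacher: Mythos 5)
First, a point of reference: the paper does not prove this statement at all --- Lemma \ref{lemlu3} is imported verbatim from Dey and Rout \cite{dr} (their Proposition 3.3), so there is no in-paper argument to compare yours against, and your write-up has to stand on its own as a complete proof. It does not, although the preliminary reductions are correct and worth keeping: since $k\ge 1$ the hypothesis forces $3\mid C_n$ and hence $n$ odd; the lifting-the-exponent computation $v_3(C_n)=1+v_3(n)$ is right (and checks numerically, e.g.\ $C_3=3^2\cdot 11$ and $C_9=3^3\cdot 143737$); and the conclusion that the prime-to-$3$ part of $C_n$ must be a perfect $l$-th power $y_0^l$ with $y_0\ge 2$ is a legitimate reformulation of the problem. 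Everything after that point, however, is not an argument.

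The gap is exactly the one you name yourself. Bilu--Hanrot--Voutier supplies a prime $p$ whose rank of apparition is $2n$, hence dividing $C_n=U_{2n}/U_n$, but it gives no control whatsoever on $v_p(C_n)$; a Wall--Sun--Sun-type prime dividing $\Phi_{2n}(\alpha,\beta)$ to order $\ge l$ would defeat the strategy, and nothing you say excludes this at even a single index. Having conceded that, your proposed repair --- reduce to $l=2$ and to $l\ge 3$ prime, then ``square-class analysis in the style of Cohn and Ribenboim--McDaniel'' for the former and ``Frey curves, level lowering and linear forms in logarithms'' for the latter --- is a list of method names rather than a proof: no square classes are computed (in particular the residual equation $C_n=3x^2$ is never actually solved), no Frey curve is written down, no level or bound on $n$ is derived, and no final computation is performed. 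As it stands the proposal establishes the lemma only for the handful of small odd $n$ you factor explicitly. A self-contained route that avoids the exponent-control issue entirely is to use the identity $C_n=Q_{2n}$ to transfer the equation to the associated Pell sequence and invoke (or reprove) a classification of the solutions of $Q_m=3^s x^b$ of the kind recorded in \cite{bdgl}, which is presumably how \cite{dr} proceeds; failing that, the honest move is to cite \cite{dr} as the paper does.
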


We call a natural number $t$ the period of the balancing sequence modulo $\mu$ if $B_t\equiv 0, B_{t+1} \equiv 1\pmod{\mu}$ and for if for some natural number $k, \; B_k\equiv 0, B_{k+1} \equiv 1\pmod{\mu}$, then $t$ divides $k$ (see \cite{BPP, PR}).
\begin{lemma}\label{lemper}
The balancing sequence have the following divisibility properties (see \cite[Theorem 5.1]{PR}):
\begin{align*}
& 2\mid B_n \iff n\equiv 0\pmod{2};\\
& 4\mid B_{n}\iff n\equiv 0\pmod{4}.
\end{align*}
Further, the residue of $B_n$ modulo $9$ depends on the residue of $n$ modulo $12$ as follows:
\begin{align*}
&B_n \equiv 0\pmod{9} \iff n\equiv 0, 6 \pmod{12},\\
&B_n \equiv 1\pmod{9} \iff n\equiv 1, 5, 9 \pmod{12},\\
&B_n \equiv 3\pmod{9} \iff n\equiv 8, 10 \pmod{12},\\
&B_n \equiv 6\pmod{9} \iff n\equiv 2, 4 \pmod{12},\\
&B_n \equiv 8\pmod{9} \iff n\equiv 3, 7, 11 \pmod{12}.
\end{align*}
\end{lemma}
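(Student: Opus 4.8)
The plan is to exploit the fact that a binary recurrence sequence is purely periodic modulo any fixed integer $\mu$, and then simply tabulate one full period for each of the moduli $2$, $4$ and $9$. First I would observe that the recurrence \eqref{eq2} can be run backwards: from $B_n$ and $B_{n-1}$ one recovers $B_{n-2} = 6B_{n-1} - B_n$. Equivalently, the map sending the pair $(B_n, B_{n+1})$ to $(B_{n+1}, B_{n+2}) = (B_{n+1},\, 6B_{n+1}-B_n)$ is given on $(\Z/\mu\Z)^2$ by the matrix $\left(\begin{smallmatrix} 0 & 1 \\ -1 & 6\end{smallmatrix}\right)$, whose determinant is $1$ and which is therefore a bijection modulo every $\mu$. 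Consequently the sequence of pairs is \emph{purely} periodic, so there is a least positive integer $t$ (the period in the sense defined just before the statement) with $(B_t, B_{t+1}) \equiv (B_0, B_1) = (0,1) \pmod{\mu}$, and any $k$ with $B_k\equiv 0,\,B_{k+1}\equiv 1\pmod\mu$ is forced to be a multiple of $t$.

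Next, for each modulus I would determine the period and list the residues. Modulo $2$ the recurrence collapses to $B_n \equiv B_{n-2}\pmod 2$ (since $6\equiv 0$), so the residues run $0,1,0,1,\dots$ with period $2$, giving $2\mid B_n \iff n\equiv 0\pmod 2$. Modulo $4$ a direct iteration from $(B_0,B_1)=(0,1)$ produces $0,1,2,3$ and then repeats, so the period is $4$ and $4\mid B_n \iff n\equiv 0\pmod 4$. Modulo $9$ I would iterate \eqref{eq2} starting from $(0,1)$ until the pair $(0,1)$ first recurs; this happens at index $12$, yielding one period of twelve residues
\[
0,\,1,\,6,\,8,\,6,\,1,\,0,\,8,\,3,\,1,\,3,\,8.
\]
Grouping the indices $0,1,\dots,11$ according to the residue value attained then reads off exactly the five equivalences claimed: residue $0$ at $n\equiv 0,6$; residue $1$ at $n\equiv 1,5,9$; residue $3$ at $n\equiv 8,10$; residue $6$ at $n\equiv 2,4$; and residue $8$ at $n\equiv 3,7,11$, all taken $\pmod{12}$.

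Since the entire argument reduces to a finite verification once pure periodicity is in hand, there is no genuine obstacle; the only points that require care are the justification of \emph{pure} (rather than merely eventual) periodicity — which is precisely the backwards-solvability, i.e.\ the invertibility of the transition matrix noted above — and confirming that the periods $2$, $4$, and $12$ are truly minimal, so that the statements hold as biconditionals rather than as single implications.
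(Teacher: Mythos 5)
Your proof is correct and complete. The paper itself gives no argument for this lemma --- it simply cites \cite[Theorem 5.1]{PR} --- so there is nothing internal to compare against; your route (pure periodicity from the invertibility of the transition matrix $\left(\begin{smallmatrix}0&1\\-1&6\end{smallmatrix}\right)$, which has determinant $1$, followed by tabulating one full period modulo $2$, $4$ and $9$) is exactly the standard verification underlying such statements, and your period-$12$ residue list $0,1,6,8,6,1,0,8,3,1,3,8$ modulo $9$ checks out. One small remark: for the biconditionals you do not actually need \emph{minimality} of the periods, only that the tabulation over one full period is exhaustive and that the listed residue classes of $n$ are pairwise disjoint, which your table already exhibits.
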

\begin{lemma}[Theorem 5.1, \cite{PR}]\label{lem2.14}
For any natural number $2^k \mid n$ if and only if $2^k\mid B_n$.
\end{lemma}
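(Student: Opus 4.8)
The statement asserts that the powers of $2$ dividing $n$ coincide with those dividing $B_n$; equivalently, it claims the valuation identity $v_2(B_n)=v_2(n)$ for every $n\ge 1$, where $v_2$ denotes the $2$-adic valuation. The plan is to prove this sharper identity, from which the stated biconditional is immediate: once $v_2(B_n)=v_2(n)$, both $2^k\mid n$ and $2^k\mid B_n$ are equivalent to $k\le v_2(n)$, for every $k$.

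The engine is a doubling identity. Putting $x=y=n$ in the addition formula $B_{x+y}=B_xC_y+C_xB_y$ (the formula already used in the proof of Lemma \ref{lem1}) gives
\[
B_{2n}=2\,B_nC_n.
\]
Since every Lucas-balancing number $C_n$ is odd---this is shown in the proof of Lemma \ref{lem3}---the identity yields $v_2(B_{2n})=1+v_2(B_n)$. Hence each doubling of the index increases the $2$-adic valuation of the balancing number by exactly one.

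With this in hand I would induct on $a=v_2(n)$. For the base case $a=0$, i.e.\ $n$ odd, reduce the recurrence modulo $2$: since $B_n=6B_{n-1}-B_{n-2}\equiv B_{n-2}\pmod 2$ and $B_0=0,\ B_1=1$, the sequence of parities is $0,1,0,1,\dots$, so $B_n$ is odd exactly when $n$ is odd, giving $v_2(B_n)=0=v_2(n)$. For the inductive step with $a\ge 1$, write $n=2n'$, where $v_2(n')=a-1$; the doubling identity gives $v_2(B_n)=1+v_2(B_{n'})$, and the induction hypothesis gives $v_2(B_{n'})=a-1$, whence $v_2(B_n)=a=v_2(n)$. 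This closes the induction and establishes $v_2(B_n)=v_2(n)$, and hence the lemma.

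No serious obstacle is expected: the result follows from a clean interplay between the additive structure of the balancing sequence and the parity of the Lucas-balancing numbers. The one point to watch is to keep the two inputs---the oddness of every $C_n$ and the parity pattern of $B_n$---logically independent of the claim itself; both are elementary facts read directly off the defining recurrences, so no circularity arises.
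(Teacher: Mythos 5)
This lemma is stated in the paper with no proof at all: it is quoted verbatim as Theorem 5.1 of \cite{PR}, so there is no internal argument to compare yours against. Your proof is correct and genuinely self-contained, which is precisely what it buys over the paper's citation. The sharper valuation identity $v_2(B_n)=v_2(n)$ is indeed equivalent to the stated biconditional; your doubling identity $B_{2n}=2B_nC_n$ is the case $x=y=n$ of the addition formula $B_{x+y}=B_xC_y+C_xB_y$ already invoked in the proof of Lemma \ref{lem1} (and this exact identity $B_{2n}=2B_nC_n$ is used in the proof of Proposition \ref{prop1}); the oddness of every $C_n$ is established inside the proof of Lemma \ref{lem3}; and the parity pattern $0,1,0,1,\dots$ of $B_n$ falls out of reducing the recurrence \eqref{eq2} modulo $2$, so neither ingredient presupposes the claim and there is no circularity. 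The induction on $a=v_2(n)$ is airtight: the base case is the parity pattern, and the step is $v_2(B_{2n'})=1+v_2(B_{n'})$ applied to $n=2n'$. Two small remarks: first, your argument actually delivers more than the paper ever uses, since Proposition \ref{prop1} needs only the implication $2^g\mid N\Rightarrow 2^g\mid B_N$, while you obtain equality of the $2$-adic valuations; second, your method also recovers, for free, the first two divisibility facts listed in Lemma \ref{lemper} (the cases $k=1,2$), which the paper likewise imports from \cite{PR}. If the authors wished to make the paper independent of that reference on this point, your paragraph could be inserted essentially as written.
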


\begin{proposition}\label{prop1}
 The only positive integer solution of the Diophantine equation	\begin{equation}\label{eq8}
  B_NC_M = 2^px^q  
 \end{equation}
 with $N$, $M$, $x$ positive integers, $p \geq 0$, $q \geq 2$ is $(N, M)$ = $(2,1)$. 
\end{proposition}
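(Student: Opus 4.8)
The plan is to strip off the power of $2$ and reduce \eqref{eq8} to an equation among odd numbers, then split according to the greatest common divisor of $B_N$ and $C_M$ as dictated by Lemma \ref{lem4}(3).

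Since every Lucas-balancing number is odd (this is established in the proof of Lemma \ref{lem3}), all factors of $2$ on the right of \eqref{eq8} must come from $B_N$. Writing $x = 2^t y$ with $y$ odd and setting $s = v_2(B_N)$, I would compare $2$-adic valuations to get $s = p + qt$ and, after cancelling $2^s$, the clean equation $\beta\, C_M = y^q$, where $\beta = B_N/2^s$ is the odd part of $B_N$ and $y$ is odd. Because $C_M$ is odd, $\gcd(\beta, C_M) = \gcd(B_N, C_M)$, which by Lemma \ref{lem4}(3) equals $C_d$ with $d = \gcd(N,M)$ when $v_2(N) > v_2(M)$, and equals $1$ otherwise.

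In the case $v_2(N) \le v_2(M)$, so $\gcd(\beta, C_M) = 1$, I would argue directly with valuations: for every (odd) prime $\ell \mid C_M$ one has $\ell \nmid \beta$, hence $v_\ell(C_M) = v_\ell(y^q) = q\, v_\ell(y) \equiv 0 \pmod q$. Thus $C_M$ is a perfect $q$-th power with $q \ge 2$, and since $M \ge 1$ this contradicts Lemma \ref{lemlucbal}; note that the same valuation count forces $C_M = 1$ when $x = 1$, which is again impossible for $M \ge 1$. So this case produces no solutions.

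The remaining case $v_2(N) > v_2(M)$, where $\gcd(\beta, C_M) = C_d \ge C_1 = 3$, is the crux. Here I would write $\beta = C_d\,\beta_1$ and $C_M = C_d\,\gamma_1$ with $\gcd(\beta_1,\gamma_1)=1$, so that $C_d^2\,\beta_1\gamma_1 = y^q$, and then examine the primes of $C_M$ lying outside $C_d$. Any prime $\ell \mid C_M$ with $\ell \nmid C_d$ satisfies $\ell \nmid \beta$, whence $v_\ell(C_M) \equiv 0 \pmod q$; in other words, $C_M$ agrees with a perfect $q$-th power away from the primes dividing $C_d = C_{\gcd(N,M)}$. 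The main obstacle is to turn this into a contradiction unless $d = M$: the natural route is to show that when $d < M$ the quotient $C_M/C_d$ carries a prime that does not divide $C_d$ and occurs to a power not divisible by $q$ (a new/primitive prime divisor, supplemented by a direct check of small indices), which is incompatible with $q \ge 2$ and therefore forces $M \mid N$. Once $M \mid N$ the equation collapses to $C_M^2\,\beta_1 = y^q$ with $C_M \mid \beta$, and I would finish by combining Lemmas \ref{lem7}, \ref{lemlucbal} and \ref{lem2.14} with a short $2$- and $3$-adic valuation analysis to pin the only surviving possibility down to $\beta_1 = 1$, $N = 2$, $M = 1$, $q = 2$, i.e. $B_2 C_1 = 18 = 2\cdot 3^2$. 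Controlling the shared factor $C_d$ — equivalently, ruling out the high prime-power coincidences between $C_M$ and $C_{\gcd(N,M)}$ that the perfect-power condition would otherwise permit — is where the real difficulty lies.
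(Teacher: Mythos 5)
Your reduction to $\beta\,C_M=y^q$ with $\beta$ the odd part of $B_N$, and your disposal of the case $v_2(N)\le v_2(M)$ via $\gcd(B_N,C_M)=1$ and Lemma \ref{lemlucbal}, are correct and match the paper. But the case $v_2(N)>v_2(M)$ --- which you yourself identify as ``where the real difficulty lies'' --- is not actually proved. Your plan there rests on showing that for $d=\gcd(N,M)<M$ the quotient $C_M/C_d$ carries a prime outside $C_d$ to a power not divisible by $q$. A primitive divisor theorem gives you a new prime $\ell\mid C_M$, but gives no control over $v_\ell(C_M)\bmod q$, so the intended contradiction with ``$C_M$ is a $q$-th power away from the primes of $C_d$'' does not follow; and the endgame after forcing $M\mid N$ (the equation $C_M^2\beta_1=y^q$) is likewise only gestured at. As written, the crux of the proposition is an open step, not a proof.

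The paper closes exactly this gap by a different decomposition that you do not use: instead of confronting $\gcd(B_N,C_M)=C_d$ head-on, it writes $N=2^t dr$ with $r$ odd and iterates the doubling identity $B_{2n}=2B_nC_n$ to get
\begin{equation*}
B_NC_M \;=\; 2^t\,B_{dr}\cdot C_{dr}\,C_{2dr}\cdots C_{2^{t-1}dr}\cdot C_M .
\end{equation*}
The point is that $v_2(dr)=v_2(M)\le v_2(2^i dr)$, so Lemma \ref{lem4} makes $B_{dr}$ coprime to the entire product of Lucas-balancing factors, and (for $t\ge 2$) makes $C_{2^{t-1}dr}$ coprime to the remaining factors. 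Each coprime block must then be a $q$-th power up to a power of $2$, and the known non-power results (Lemmas \ref{lem2}, \ref{lemlucbal}, \ref{lemlu3}) force $dr=1$ and $t\le 1$, whence $N=2$, $p=1$, and finally $M=1$ from $C_M=3^{q-1}x_3^q$. No primitive-divisor input is needed. If you want to salvage your route, you would have to either prove the missing statement about $v_\ell(C_M)\bmod q$ for new primes $\ell$, or adopt the paper's factorization, which sidesteps it entirely.
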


\begin{proof}
Put $N=2^gN_a$ and  $M=2^hM_a$, where $N_a$, $M_a$ are odd and $g$ and $h$ are non-negative integers. By Lemma \ref{lem2.14}, $2^g\mid B_n$, that is, $B_n = 2^g k_1$ for some integer $k_1$. If $g \leq h$, then by Lemma \ref{lem4}, we know that $\gcd (B_N, C_M)$ = 1. Hence, $C_M$ = $x_2^q$ with $2\nmid x_2$, which has no solution. So, in this case, solution does not exists.

 Hence, we may assume that $g>h$. Let $g-h>0$ and suppose $d = \gcd(N, M)$. Therefore $d = 2^h\gcd(N_a,M_a)$. Write $N=2^tdr$, where $r$ is an odd integer. Then by Lemma \ref{lem4} and using $B_{2n} = 2B_n C_n$, we get  
 \begin{align*}
 2^px^q & = B_NC_M \\
 &= B_{2^tdr}C_M \\
 &= B_{2\cdot 2^{t-1}dr}C_M \\
 &= 2 B_{2^{t-1}dr}C_{2^{t-1}dr}C_M \\
 & = 2^2 B_{2^{t-2}dr}C_{2^{t-2}dr}C_{2^{t-1}dr}C_M \\
 &=\cdots \\
 &= 2^tB_{dr}\cdot C_{dr}\cdot C_{2dr} \dots C_{2^{t-1}dr}\cdot C_M.
 \end{align*}
 Note that $\upsilon_2{(dr)} = \upsilon_2{(M)}$ and $\upsilon_2{(dr)} \leq \upsilon_2{(2^idr)}$ for $i \geq 0$.
Thus by lemma \ref{lem4}(3), we get 
\[\gcd (B_{dr}, C_{dr}\cdot C_{2dr} \dots C_{2^{t-1}dr}\cdot C_M) =1.\]
 So, 
 \[B_{dr} = x_1^q \quad \mbox{or} \quad 2^ux_1^q, \quad C_{dr}\cdot C_{2dr} \dots C_{2^{t-1}dr}\cdot C_M = x_2^q \quad \mbox{and}\;\; x_1x_2 = x.\]
If $B_{dr} = 2^u x_1^q$ with $u\geq 0$,  then by Lemma \ref{lem2}, we get $dr = 1$.
Thus from
 \[2^px^q = 2^tB_{dr}\cdot C_{dr}\cdot C_{2dr} \dots C_{2^{t-1}dr}\cdot C_M.\]
 We get
\[2^px^q = 2^t \cdot C_{1}\cdot C_{2} \dots C_{2^{t-1}}\cdot C_M.\]
 Here, in the right hand side all terms are odd except $2^t$. Hence, $p=t$. Now let $t \geq 2$. Then $\upsilon_2{(M)} < \upsilon_2{(2^{t-1}dr)}$. Using Lemma \ref{lem4}, we get 
\[\gcd (C_{2^{t-1}dr}, C_{dr}\cdot C_{2dr} \dots C_{2^{t-2}dr}\cdot C_M) =1.\]
Thus, 
\[C_{2^{t-1}dr} = x_3^q, C_{dr}\cdot C_{2dr} \dots C_{2^{t-2}dr}\cdot C_M= x_4^q,\quad \mbox{and}\;\; x_3x_4 = x_2.\] 
Then $2^{t-1}dr =0$, which is impossible.
Thus, $t = 1$. Hence, $N = 2$, and so $B_N=6$. Now $B_NC_M$ = $2^px^q$.
Here putting the value of $B_N$, we get $6C_M = 2^px^q$. which gives $3C_M = 2^{p-1}x^q$. So $C_M = 2^{p-1}3^{q-1}x_3^q,$ as $3 | x$. Thus if $p>1$, then solution does not exist.  If $p=1$, then $C_M=3^{q-1}x_3^q$. So, by Lemma \ref{lemlu3}, we get $M=1$. Hence, $(M,N)$ = $(1,2).$ If $p=0$, then $t =0$, and hence $N =dr = 1$, and so $B_N=1$. This implies $C_M = x^q$, which has no solution. This completes the proof.
\end{proof}

Now, we will give the proof of our main result.
\section{Proof of main theorems}\label{sec3}
\subsection{Proof of Theorem \ref{thm1}}
 If either $n=0$ or $m=0$, then the theorem follows from Lemma \ref{lembal}. If $n=m$, then the \eqref{eq5} becomes $2B_n = x^q$, which can also be written as $B_n = 2^{q-1}x_1^q$. From Lemma \ref{lem2}, we get $n=1$. Thus, we may assume that $n>m>0$. Since $n\equiv m \pmod{2}$, then by Lemma \ref{lem1}, we get
\begin{equation}\label{eq132}
x^q=B_n+B_m = 2B_NC_M,
\end{equation}
where $N= \frac{n+m}{2}$ and $M= \frac{n-m}{2}$ (here $N$ and $M$ both are positive). So from \eqref{eq132}, $2\mid x$, that is $x = 2x_1$ for some integer $x_1$. Thus, \eqref{eq132} becomes
\begin{equation}\label{eq133}
2^{q-1}x_1^q=B_NC_M,
\end{equation}
Using Proposition \ref{prop1}, we get $N=2$ and $M=1$ and this implies $n =3$ and $m=1$. This completes the proof. \qed

\subsection{Proof of Theorem \ref{thm2}}
For any non-negative integers  $n$ and $m$, we have
\[x^q=B_n^2 - B_m^2 =B_{n+m}B_{n-m}.\]
Since $\gcd(B_n, B_m) =1$, we get $\gcd(n, m) =1$. This implies $\gcd(n+m, n-m) =1$ or $2$. Suppose $\gcd(n+m, n-m) =1$. By Lemma \ref{lem4} , \[\gcd(B_{n+m},B_{n-m}) =B_{\gcd(n+m,n-m)} = B_1=1.\] Thus we have, 
\[B_{n+m}=u^q,\quad  B_{n-m}=v^q, \quad\mbox{and}\quad x = uv.\]
By Lemma \ref{lembal}, we get $n+m = 1$ and $n-m=1$ and hence $(n, m, x, q) = (1, 0, 1, q)$. Next consider the case, $\gcd(n+m, n-m) =2$. In this case, 
\[\gcd(B_{n+m},B_{n-m}) = B_2 = 6.\] So,
\[B_{n+m} = 6x_1^q, B_{n-m} = 6^{q-1}x_2^q; \quad \mbox{or} \quad B_{n+m} = 6^{q-1}x_1^q, B_{n-m} = 6x_2^q.\]  If  $B_{n+m} = 6x_1^q$ and $B_{n-m} = 6^{q-1}x_2^q$, then from Lemma \ref{lem7} and Lemma \ref{lemlu3}, we get $n+m= 2$ and $n-m =2$. In this case, we get $(n, m, x, q) = (2, 0, 6, 2)$. This completes the proof. \qed

\subsection{Proof of Theorem \ref{thm3}}
First assume the case $n \equiv m \pmod{2}$ with $n>m$. Since $\gcd(B_n,B_m)=1$, then from Lemma \ref{lem6a}, we get the following two cases:
\begin{enumerate}
	\item $B_n \pm B_m = x^q$;
	\item  $B_n \pm B_m = 3^{q-1}x^q$.
\end{enumerate} 
For the first case, we know the solution is $(n, m, x, q)$ = $(3,1,6,2)$. However, $B_3^3 \pm B_1^3 \neq x^q$.
Hence, there is no solution for this case.

Now consider  $B_n \pm B_m = 3^{q-1}x^q$.
Since $n\equiv m \pmod{2}$, by Lemma \ref{lem1}, $B_n \pm B_m = 2B_NC_M$, where
\[N= \frac{n\pm m}{2} \quad \mbox{and} \quad M= \frac{n \mp m}{2}.\] So, $2\mid x$, that is, $x=2y$ for some integer $y$. Hence,
\[B_NC_M = 2^{q-1}\cdot 3^{q-1} y^q.\]
As $\gcd(B_n,B_m)=1$, thus $\gcd(n,m)=1$, so we have $\gcd(N,M)=1$. Thus, by Lemma \ref{lem4}, $\gcd(B_N,C_M)$ = $3$ or $1$.

First, we consider  $\gcd(B_N, C_M)$ = $3$. Since $C_k$ is odd for any $k\geq 0$,  we have 
\[B_N = 2^{q-1}\cdot 3\cdot x_1^q \quad \mbox{and}\quad C_M = 3^{q-2}\cdot x_2^q \quad\mbox{with} \quad 3\nmid x_1x_2, \; x_1x_2 =y.\] or 
\[B_N =  2^{q-1}\cdot 3^{q-2}\cdot x_1^q \quad \mbox{and}\quad C_M = 3\cdot x_2^q \quad\mbox{with} \quad 3\nmid x_1x_2, \; x_1x_2 =y.\] 
Thus from Lemma \ref{lem7} and Lemma \ref{lemlu3}, we get $N=2 , q=2$ and  $M=1 , q=3$. So, 
there is no solution of $B_n \pm B_m = 3^{q-1}x^q$.

Next consider, $\gcd(B_N, C_M)$ = $1$,
then we have 
\begin{itemize}
\item $B_N = 2^{q-1}y_1^q$ and $C_M = 3^{q-1}y_2^q$ with $2\nmid y_2, 3\nmid y_1$ and $y_1y_2=y, \gcd(y_1, y_2) =1$.
\item $B_N = 3^{q-1}y_1^q$ and $C_M = 2^{q-1}y_2^q$ with $2\nmid y_1, 3\nmid y_2$ and $y_1y_2=y, \gcd(y_1, y_2) =1$.
\end{itemize}
In the first case $N=1, q=1$ and $M=1, q=2$ and second case is not possible as Lucas balancing numbers are always odd.Thus, there does not exist any solution of \ref{eq13}. 
 
 Now assume that $n \not \equiv m \pmod{2}$ with $n>m$. If $m =0$, then $n=1$ since $\gcd(B_n, B_m) =1$. So the solution is $(n, m, x, q)=(1, 0, 1, k)$, where $k \geq 3$. Thus, we assume $m\geq 1$, which gives $xB_nB_m \neq 0$ and $\gcd(B_n, B_m) =1$. By Lemma \ref{lem5}, we have $3\mid x$ and by Lemma \ref{lem6a}, $B_n \pm B_m = 3^{q-1}z^q$. As $q\geq 3$, we deduce that 
 \begin{equation}\label{eqthm1.3}
 9\mid (B_n \pm B_m), \quad \mbox{with}\;\; 2\mid B_n,\; 4\nmid B_n.
 \end{equation}
 Further, by Lemma \ref{lemper}, $9\mid (B_n+B_m)$ if and only if
 \begin{enumerate}
  \item $n\equiv 0, 6\pmod{12}$ and $m\equiv 0, 6\pmod{12}$.
  \item $n\equiv 1, 5, 9\pmod{12}$ and $m\equiv 3, 7, 11\pmod{12}$.
  \item $n\equiv 8, 10\pmod{12}$ and $m\equiv 2, 4\pmod{12}$.
 \end{enumerate}
 Since $n \not \equiv m \pmod{2}$, the above cases (1) , (2) and (3) will not hold. Again, $9\mid (B_n -B_m)$ if and only if $n\equiv 0, 6\pmod{12}$ and $m\equiv 0, 6\pmod{12}$ and this not true. Thus, $B_n\pm B_m\not \equiv 0\pmod{9}$, which contradicts \eqref{eqthm1.3}. This completes the proof of Theorem \ref{thm3}. \qed
 
\section{Concluding Remark}
For $n\equiv m \pmod 2$ we find all solutions to \eqref{eq5}. Finding all solutions  to \eqref{eq5} when $n\not \equiv m \pmod{2}$ is still an open problem. Note that under the assumption, $n\not \equiv m\pmod{2}$, no factorization is known for the left hand side of  \eqref{eq5}. Further, to solve a more general Diophantine equation of the form
\[B_n^p+B_m^p= x^q\] in integers $(n, m, x, p, q)$, one need to know integral solutions of equations of the shape
\begin{equation}\label{eqcr}
B_n = p^a z^q,\;\; \mbox{and}\;\;C_n = p^a z^q
\end{equation} with $p$ prime, $q\geq 2, a>0$.  It is interesting to find all explicit solutions (if any) to \eqref{eqcr}. 

{\bf Acknowledgment:} The authors sincerely thank the referee for his/her thorough reviews and very helpful comments and suggestions which significantly improves the paper. The first author's work is supported by CSIR fellowship(File no: 09/983(0036)/2019-EMR-I).

\end{document}